\colorlet{darkblue}{blue!50!black}
\colorlet{darkblue}{blue!50!black}
\renewcommand{\Re}{\mathop{\rm Re}\nolimits}
\newcommand{\p}{\partial}
\newcommand{\e}{\varepsilon}
\newcommand{\C}{{\mathbb C}}
\newcommand{\R}{{\mathbb R}}
\newcommand{\Z}{{\mathbb Z}}
\newcommand{\IP}{{\mathbb P}}
\newcommand{\pP}{{\mathbb P}}
\newcommand{\I}{{\mathbb I}}
\newcommand{\E}{{\mathbb E}}
\newcommand{\T}{{\mathbb T}}
 \newcommand{\scH}{{\mathscr H}}
\newcommand{\N}{{\mathbb N}}
\newcommand{\la}{\lambda}
\newcommand{\La}{\Lambda}
\newcommand{\ty}{\infty}
\newcommand{\de}{\delta}
\newcommand{\Ker}{\mathop{\rm Ker}\nolimits}
\newcommand{\aA}{{\cal A}}
\newcommand{\BB}{{\cal B}}
\newcommand{\GG}{{\cal G}}
\newcommand{\HH}{{\cal H}}
\newcommand{\II}{{\cal I}}
\newcommand{\KK}{{\cal K}}
\newcommand{\OO}{{\cal O}}
\newcommand{\PP}{{\cal P}}
\newcommand{\RR}{{\cal R}}
\newcommand{\XX}{{\cal X}}
\newcommand{\lag}{\langle}
\newcommand{\rag}{\rangle}
\newcommand{\dd}{{\textup d}}
\newcommand{\PPPP}{{\mathfrak P}}
\let\emptyset\varnothing
\newcommand{\lspan}{\mathop{\rm span}\nolimits}
\newcommand{\supp}{\mathop{\rm supp}\nolimits}
\theoremstyle{plain}
\newtheorem*{mta}{Theorem~\hypertarget{A}{A}}
\newtheorem*{mtb}{Theorem~\hypertarget{B}{B}}
\newtheorem*{lemma*}{Lemma}
\newtheorem{theorem}{Theorem}[section]
\newtheorem{lemma}[theorem]{Lemma}
\newtheorem{proposition}[theorem]{Proposition}
\theoremstyle{definition}
\newtheorem{definition}[theorem]{Definition}
\theoremstyle{remark}
\numberwithin{equation}{section}
\begin{document} 
\author{Vahagn~Nersesyan\,\footnote{Universit\'e Paris-Saclay, UVSQ, CNRS, Laboratoire de Math\'ematiques de Versailles, 78000, Versailles, France;  e-mail: \href{mailto:vahagn.nersesyan@uvsq.fr}{Vahagn.Nersesyan@uvsq.fr}}}
 \date{\today}

\title{The complex Ginzburg--Landau equation  perturbed by a force     localised both in   physical and Fourier~spaces}
\date{\today}
\maketitle

\begin{abstract}

In the paper~\cite{KNS-2018},~a criterion for     exponential mixing  is established    for a class of    random dynamical systems.~In~that~paper, the~criterion is applied to PDEs perturbed by a noise  localised  in the  Fourier space. In~the present paper, we show that, in~the case of the complex Ginzburg--Landau~(CGL) equation, that criterion can be used to consider even more degenerate noise that is localised both in   physical and Fourier~spaces. This~is achieved by
  checking   that the linearised equation is almost surely   approximately  controllable.~We~also study the~problem of    controllability   of the    nonlinear~CGL equation.~Using Agrachev--Sarychev type arguments,   we prove an approximate controllability property in the case of a   control  force which is again   localised     in   physical and Fourier spaces.

   .

\smallskip
\noindent
{\bf AMS subject classifications:}     35Q56, 35R60, 37A25, 37L55,   60H15,  93B05, 93B18

\smallskip
\noindent
{\bf Keywords:}   Complex Ginzburg--Landau equation, localised noise/control,   exponential mixing,
approximate controllability, observable process, unique continuation, saturation property

\end{abstract}

  \tableofcontents

\setcounter{section}{-1}

\section{Introduction}
\label{S:0}

In this paper, we consider the    complex 
 Ginzburg--Landau (CGL) equation on the torus~$\T^3=\R^3/2\pi\Z^3$ driven    by a very degenerate   random   or control forces.~The main novelty of this paper is the assumption    that the force acts     on an arbitrary non-empty  open set~$\omega\subset \T^3$ trough 
   only few  Fourier modes. More~precisely, we consider the~problem
 \begin{align} 
	\p_tu-(\nu+i)\Delta u+\gamma u+ic|u|^4u&=\chi(x) \, \eta(t,x), \quad x\in \T^3,\label{0.1}\\
	u(0,x)&=u_0(x),\label{0.2}
\end{align}where $\nu, \gamma,c>0$ are some parameters, $\chi:\T^3\to \R_+ $ is a   smooth function such that $\supp \chi \subset \omega$, and     $u=u(t,x)$ is a complex-valued unknown function. Let~us begin with the case where $\eta$ is a random force.~To~simplify the presentation, we~assume in this Introduction that 
  $\eta$ is a   Haar random    process of the   form
\begin{equation}\label{0.3}
	\eta(t,x)=    \sum_{l\in\KK}   \left(b_c^l\eta_c^l(t)\cos\lag l,x \rag    +b_s^l\eta_s^l(t)\sin\lag l,x \rag\right),
\end{equation}
where   $\KK\subset \Z^3$ is the set  
\begin{equation}\label{0.4}
\KK=\{(0,0,0),\,(1,0,0),\, (0,1,0),\,(0,0,1)\},
\end{equation} $  \{b_c^l, b_s^l\}_{l\in \KK}$ are positive   numbers, and  $\eta_c^l=\eta_{c,1}^l+i\eta_{c,2}^l$ and $\eta_s^l=\eta_{s,1}^l+i\eta_{s,2}^l$ are complex-valued  processes 
with $\{\eta_{c,j}^l,    \eta_{s,j}^l:l\in \KK, j=1,2\}$ being   independent copies of a real-valued  process    given~by
$$
	\tilde\eta(t)=\sum_{k=0}^\infty \xi_k h_0(t-k)
	+\sum_{j=1}^\infty \sum_{m=0}^\infty j^{-q}\xi_{jm}h_{jm}(t).
$$
Here~$\{\xi_k,\xi_{jm}\}$  are independent identically distributed (i.i.d.) random variables with Lipschitz-continuous density~$\rho$,     $\{ h_0,h_{jm}\}$ is the Haar basis  (see~Section~5.2 in~\cite{KNS-2018}), and $q>1$.~The~restriction   to integer times   of the  solution  of the problem~\eqref{0.1},~\eqref{0.2}  defines  a family of Markov processes~$(u_k, \IP_u)$ parametrised by the initial condition  $u_0= u \in H^1(\T^3,\C)$.
 Let $\La\subset \T^3$ be the level set for the maximum of the  funciton $\chi$, i.e., 
\begin{equation}\label{0.5}
 \La=\{x\in \T^3:\chi (x)=M\}, \quad \text{where  $M=\max_{x\in \T^3} \chi (x)$.}
\end{equation}
 We prove the following~result.
 \begin{mta}Assume that the   set $ \La$ has a nonempty interior,
  the support of the density~$\rho$ is bounded, and     $\rho(0)>0$.~Then
   the process~$(u_k, \pP_u)$ has a
unique stationary measure  on $H^1(\T^3,\C)$  which is 
  exponentially mixing
in the dual-Lipschitz metric.
 \end{mta}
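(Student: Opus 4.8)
\medskip
\noindent\textbf{Proof proposal.}
The plan is to deduce Theorem~A from the general criterion for exponential mixing of Markov processes proved in~\cite{KNS-2018}. That criterion applies to the discrete-time random dynamical system $u_k$ generated by a dissipative PDE driven by a bounded noise with a \emph{decomposable} structure; the Haar process~\eqref{0.3} has this structure, since over a unit time interval its increment splits into a leading coordinate $\xi_k h_0$ with Lipschitz density $\rho$ (and $\rho(0)>0$) plus an independent, more regular tail. Granted this structure, the criterion produces a unique stationary measure that is exponentially mixing in the dual-Lipschitz metric once one checks: (a) the time-one resolving map of~\eqref{0.1}--\eqref{0.2} is a well-defined, continuous, compact (smoothing) map on $H^1(\T^3,\C)$ obeying dissipative bounds; (b) a global approximate controllability property; and (c) almost sure approximate controllability of the equation linearised along a typical trajectory of the random flow. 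Points (a) and (b) are comparatively soft; the substance of the proof is (c).

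For (a) I would invoke the standard well-posedness and regularity theory for the CGL equation: $-(\nu+i)\Delta$ generates an analytic semigroup on $H^1(\T^3,\C)$, whose smoothing controls the nonlinearity $ic|u|^4u$ and yields local solutions, while the energy identity --- in which the terms $i\Delta u$ and $ic|u|^4u$ contribute nothing to $\tfrac{\dd}{\dd t}\|u\|_{L^2}^2$ because $\langle\Delta u,u\rangle$ and $\langle|u|^4u,u\rangle$ are real --- gives the dissipative estimate $\|u(t)\|_{L^2}^2\le e^{-2\gamma t}\|u(0)\|_{L^2}^2+C$ and its $H^1$ counterpart; together these make the time-one map well-defined, continuous and compact from $H^1$ into itself with $\E\,\|u_k\|_{H^1}^2\le C(1+e^{-ck}\|u_0\|_{H^1}^2)$, i.e.\ exactly the structural hypotheses required by~\cite{KNS-2018}. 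For (b), the same energy identity shows that the zero solution of the \emph{unforced} equation is globally exponentially stable, so taking the control identically zero already steers any $u_0\in H^1$ arbitrarily close to $0$ in finitely many steps; this suffices for global approximate controllability to the point $\hat u=0$ and for the accompanying local stabilisation near $0$. (The finer, Agrachev--Sarychev type controllability properties of the nonlinear equation announced in the abstract are developed separately in the body of the paper and are not needed here.)

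The core is (c). Linearising~\eqref{0.1} along a trajectory $u(\cdot)$ gives the non-autonomous parabolic equation
\begin{equation*}
\p_t v-(\nu+i)\Delta v+\gamma v+ic\bigl(3|u|^4v+2|u|^2u^2\bar v\bigr)=\chi(x)\,\zeta(t,x),
\end{equation*}
controlled through $\chi\zeta$ with $\zeta(t,\cdot)$ ranging over the finite-dimensional space spanned by $\{\cos\lag l,x\rag,\sin\lag l,x\rag:l\in\KK\}$. By the usual duality, approximate controllability of this equation on $[0,T]$ amounts to a unique continuation property for its (backward) adjoint: if $p$ solves the adjoint linearised equation and the projection of $\chi p$ onto the $\KK$-modes vanishes identically on $(0,T)\times\T^3$, then $p\equiv0$. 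Since the hypothesis that $\La$ has nonempty interior forces $\chi\equiv M>0$ on an open set $\om_0\subset\om$, the vanishing condition says that finitely many Fourier modes of $p$ vanish on $(0,T)\times\om_0$; a Carleman-type unique continuation estimate for $\p_t-(\nu+i)\Delta$ with the bounded zeroth-order coefficients built from $u$ then propagates this to $p\equiv0$ on all of $(0,T)\times\T^3$. To pass from ``$\KK$ generates the control space'' to ``every mode is asymptotically reachable'' one runs an Agrachev--Sarychev saturation argument, in which the quintic term $|u|^4u$ generates from the four modes~\eqref{0.4} an increasing family of subspaces with dense union. I expect the main obstacle to be precisely this step: establishing the unique continuation / observability property for the linearised CGL operator when the observation is restricted simultaneously to an open subset of $\T^3$ and to finitely many Fourier modes, and ensuring it holds almost surely along the random trajectory, where the coefficient $u$ has only limited regularity. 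Once (a)--(c) are in place, the theorem of~\cite{KNS-2018} applies and yields the asserted uniqueness and exponential mixing.
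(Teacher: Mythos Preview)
Your overall plan (apply the criterion of~\cite{KNS-2018}, with (a) regularity/dissipativity, (b) global stability of~$0$, and (c) almost-sure approximate controllability of the linearised equation) matches the paper. Parts (a) and (b) are indeed routine. The gap is in your proposed mechanism for~(c).

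The dual observation condition is $\int_{\T^3}\chi(x)\,\zeta(x)\,\overline{w(t,x)}\,\dd x=0$ for all $\zeta\in\HH(\KK)$, an integral over the \emph{whole} torus. Your claim that ``the vanishing condition says that finitely many Fourier modes of $p$ vanish on $(0,T)\times\omega_0$'' is not correct: $\chi$ is not supported in~$\omega_0$ (it is merely equal to its maximum there), so knowing $\chi\equiv M$ on~$\omega_0$ does not localise the integral. Consequently you cannot feed a Carleman estimate directly, because you never have $w$ (or any derivative of it) vanishing on an open space--time set from the raw observation alone. This is precisely the difficulty created by simultaneous physical and Fourier localisation.

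What the paper actually does is different and uses two ingredients you do not mention. First, the Haar noise is \emph{observable} in the sense of Definition~\ref{D:1.3}: a relation $\sum_l a_l(t)\eta^l(t)=b(t)$ with $a_l$ Lipschitz and $b$ continuous forces all $a_l\equiv0$. Second, one runs an inductive time-differentiation argument: starting from $(\chi\zeta,w(t))_{L^2}=0$ for $\zeta\in\HH_0$, differentiate in~$t$, substitute Eqs.~\eqref{0.1} and~\eqref{2.4}, and use observability to peel off the coefficients multiplying~$\eta^l(t)$; four iterations produce the fifth derivative $B_5$ of the nonlinearity, which (because $1\in\HH$) yields $(\chi^{p+4}\xi,w(t))_{L^2}=0$ for $\xi\in\HH_{k+1}$. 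The point is that each step both raises the power of~$\chi$ and enlarges the test space from~$\HH_k$ to~$\HH_{k+1}$. One obtains $(\chi^{p_n}\zeta,w)_{L^2}=0$ for all $\zeta\in\HH_\infty$ with $p_n\to\infty$; dividing by~$M^{p_n}$ and using dominated convergence ($(\chi/M)^{p_n}\to\I_\Lambda$) gives $(\zeta,w)_{L^2(\OO)}=0$, hence $w|_\OO=0$ by the $\OO$-saturation of~$\HH(\KK)$. Only at this final stage does one invoke standard parabolic unique continuation (vanishing on an open set implies vanishing everywhere). So the ``almost surely'' comes from the observability property of the noise paths, not from genericity of the coefficient~$u$, and the passage from global $\chi$-weighted moments to local vanishing on~$\OO$ is achieved by the $\chi^{p_n}/M^{p_n}$ limit, not by any Carleman argument with finite-mode observation.
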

See Section~\ref{S:1} for more general version of this theorem.~The ergodicity of    randomly forced PDEs has been extensively studied in the literature, mainly in the case of non-degenerate noises.~We refer the reader  to         the~papers~\cite{FM-1995,KS-cmp2000,EMS-2001,BKL-2002} for the       first  results and  the book    \cite{KS-book} and the  reviews~\cite{Flandoli-08, debussche-2013}    for further   references and
     discussions of different          methods.

      We prove Theorem~A by    using a criterion for   ergodicity established in the recent paper~\cite{KNS-2018}. According to that result,    exponential mixing     holds if the resolving operator of the equation has    suitable regularity   properties, admits one globally stable equilibrium,   and has an   almost surely non-degenerate   derivative (see~Conditions~\hyperlink{H1}{(H$_1$)}-\hyperlink{H3}{(H$_3$)} in Section~\ref{S:1.1}).~In~\cite{KNS-2018}, that    criterion is applied   in the case  of the Navier--Stokes~(NS) system and the CGL equation  driven by a noise of the form~\eqref{0.3} acting on all the domain (i.e., when~$ \chi\equiv1$ on~$\T^3$ in the case of Eq.~\eqref{0.1}). The~main difficulty of the problem  considered in the present paper is the verification of the non-degeneracy  property  for the derivative.~It  is related to  the approximate controllability of the linearised equation and is checked      combining trigonometric lie-algebraic computations and   a unique continuation property for linear parabolic equations. Let us stress  that   we   use in an essential way the  local nature of the nonlinear term in the CGL equation, and the   problem remains open in the case of the NS  system.

     The controllability   approach used in this paper
  has been developed starting with the papers~\cite{shirikyan-asens2015, shirikyan-2018}, where     exponential mixing is established  for the NS~system with   a~space-time or  boundary   localised    noise.~In~\cite{KNS-2019}, a~version of the 
        criterion of~\cite{KNS-2018} is derived, where the condition of  existence of a globally stable equilibrium is replaced by a weaker property of approximate controllability for the nonlinear equation.~The criterion of~\cite{KNS-2018} is applied in~\cite{BGN-2020} to   the system of 3D primitive equations of meteorology and oceanology   with a noise only in the temperature equation, and in~\cite{vnersesyan-2019}, to~the   NS system in unbounded domains.~In~\cite{JNPS-2019}, the~controllability approach is further developed to establish a   Donsker--Varadhan type large deviations principle for the Lagrangian trajectories of the NS system.

In the case of the Burgers equation driven by a white-in-time  noise   localised in   physical and Fourier~spaces, the   uniqueness of  stationary measure and mixing   follow    from the approach of~\cite{Borit-13}, although it is not explicitly stated there.    For~the same equation, but with a forcing   that is a sum of an arbitrary smooth deterministic 
function   and a two-dimensional noise     localised in   any subinterval, the   mixing is obtained in~\cite{Shi-18} using a controllability property to trajectories. The proofs of both   papers use in an essential way the   strong dissipative
character of the Burgers equation and do not work in the case of the CGL equation.
  Let~us also recall that,   in~the case of the NS system with a white-in-time noise that is degenerate-in-Fourier (but not in physical) space,    the Malliavin calculus has been used in the papers~\cite{HM-2006, HM-2011} to prove   exponential mixing for    the   NS system.~A similar result is obtained in~\cite{FGRT-2015} in   the case of the Boussinesq~system.        
       
 In the second part of this paper,    we study the problem of approximate controllability of the nonlinear CGL equation with a control   localised both in physical and Fourier~spaces.~Because of some well-known obstructions, 
 the approximate controllability property does not hold in the entire phase space (e.g.,~see~\cite{DR-96, H-1978}).~However, using     Agrachev--Sarychev type arguments, we show  that the restriction of the trajectory to the interior $\OO$ of the level set~$\La$  is approximately controllable  to any target. More precisely, we prove the following~result.
  \begin{mtb}Let us consider the vector space   
\begin{equation}\label{0.6} 
 \HH(\KK)=\lspan\{\cos\lag l,x\rag,\, \sin\lag l,x\rag:l\in\KK\},
\end{equation}where $\KK\subset \Z^3$ is the set given  by~\eqref{0.4}.~The CGL equation is approximately controllable on the set~$\OO$ in small time
by~$\HH(\KK)$-valued control~$\eta$, i.e., for any $\e>0$, any  $T_0>0$, and any~$u_0, u_1 \in   L^2(\T^3,\C)$,    there is a time $T\in (0,T_0)$,  a~control $\eta
\in  L^2([0,T];\HH(\KK)) $, and a unique solution~$u$ of the problem \eqref{0.1},~\eqref{0.2} defined on the interval $[0,T]$   such that
$$
\|u(T)  -   u_0-u_1 \I_{\OO}\|_{L^2}<\e,
$$    where $\I_{\OO}$ is the indicator function of the set $\OO$.
 \end{mtb}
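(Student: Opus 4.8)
The plan is to run the Agrachev--Sarychev scheme for \eqref{0.1}, combining an analytic ``convexification'' step with an algebraic saturation computation and then producing the smallness of the control time by a rescaling. Write the equation as $\dot u+Lu+\FF(u)=\chi\eta$ with $L=-(\nu+i)\Delta+\gamma$, $\FF(u)=ic\,u^3\bar u^2=ic|u|^4u$, and let $\BB(\zeta_1,\dots,\zeta_5)$ be the $5$--linear form, $\C$--linear in $\zeta_1,\zeta_2,\zeta_3$ and conjugate-linear in $\zeta_4,\zeta_5$, with $\FF(u)=\BB(u,\dots,u)$. For a finite-dimensional subspace $E\subset L^2(\T^3,\C)$, let $\RR(E)$ denote the closure in $L^2(\T^3,\C)$ of the set of states reachable from $u_0$ at time $1$ by the equation forced by controls in $L^2([0,1];E)$, and put $\widehat E=E+\lspan\{\BB(\zeta_1,\dots,\zeta_5):\zeta_i\in E\}$. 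The base case is $E_0=\chi\HH(\KK):=\{\chi f:f\in\HH(\KK)\}$, whose forcings are exactly the $\HH(\KK)$-valued controls of \eqref{0.1}.

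\emph{Convexification.} Following \cite{shirikyan-2018, KNS-2019}, one proves $\RR(E)=\RR(\widehat E)$ for every finite-dimensional $E$. The only nontrivial inclusion, $\RR(\widehat E)\subset\RR(E)$, is obtained by writing $\bar\xi=\xi^0+\sum_j\BB(\zeta_j,\dots,\zeta_j)$ with $\xi^0,\zeta_j$ valued in $E$ (possible by polarisation) and approximating the trajectory forced by $\bar\xi$ by trajectories forced by $\xi^0(t)+\sum_j\lambda^{1/5}\phi_j(\lambda t)\zeta_j(t)\in L^2([0,1];E)$, where the mean-oscillating profiles $\phi_j$ are chosen so that, as $\lambda\to\infty$, exactly the quintic contribution $\BB(\zeta_j,\dots,\zeta_j)$ of $\FF$ survives — the scaling exponent $1/5$ being dictated by the fifth degree of $\FF$. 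Iterating, $\RR(E_0)=\RR(E_n)$ for all $n$, where $E_{n+1}=\widehat{E_n}$. I expect this to be the main obstacle: it is the CGL counterpart, for the quintic nonlinearity, of the Agrachev--Sarychev argument of \cite{shirikyan-2018}, and it requires a priori estimates for the CGL equation that keep the fast-oscillating trajectories in a fixed bounded set while the perturbation amplitudes tend to infinity.

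\emph{Saturation.} It remains to identify $E_\infty:=\overline{\bigcup_n E_n}$. Since $\HH(\KK)$ contains the exponentials $e^{\pm i\langle l,x\rangle}$ for $l$ in the symmetric set $\{0,\pm e_1,\pm e_2,\pm e_3\}$, and $\BB$ of five such exponentials yields the exponential with frequency $l_1+l_2+l_3-l_4-l_5$, iteration shows that the frequencies generated exhaust $\Z^3$ (one uses that $\{0,\pm e_1,\pm e_2,\pm e_3\}$ generates $\Z^3$ as a group and pads with the zero frequency so as to use exactly five terms). Keeping track of the multiplier, each element of $E_n$ is a finite sum $\sum_k\chi^{a_k}\phi_k$ with $\phi_k$ trigonometric polynomials, and here the presence of the constant mode $\mek\in\HH(\KK)$ (coming from $(0,0,0)\in\KK$) is decisive: $\BB(\chi^a\phi,\chi\mek,\chi\mek,\chi\mek,\chi\mek)$ is a nonzero multiple of $\chi^{a+4}\phi$, so the power of $\chi$ on a fixed mode may be raised without bound. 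Consequently, for each trigonometric polynomial $\phi$ in the (dense) generated frequency space one has $\chi^n\phi\in\bigcup_n E_n$ for all large $n$; since $M^{-n}\chi^n\to\I_\OO$ in $L^2(\T^3)$ — $\OO$ being the interior of the maximum set of $\chi$ — it follows that $\I_\OO\phi\in E_\infty$, and hence $E_\infty$ contains every $L^2$ function supported in $\OO$. This is precisely why the target in Theorem~B may be prescribed only on $\OO$.

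\emph{Rescaling and conclusion.} For $T>0$, the substitution $v(\tau)=u(T\tau)$ turns \eqref{0.1} into $\dot v+TLv+T\FF(v)=\chi\tilde\eta$ on $[0,1]$, with $\tilde\eta$ again ranging over all of $L^2([0,1];\HH(\KK))$; thus Theorem~B with control time $T$ is equivalent to approximate controllability at time $1$ for this $T$-scaled equation, to which the two previous steps apply verbatim (its coefficients $T\nu,T\gamma,Tc$ are still positive), yielding $\RR^T(E_0)=\RR^T(E_n)$. Given $\e,T_0>0$ and $u_0,u_1$, choose $g\in E_n$ with $\|g-u_1\I_\OO\|_{L^2}<\e/4$. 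Feeding the constant forcing $\xi\equiv g$ into the $T$-scaled equation, its solution $v^T$ satisfies $v^T(1)\in\RR^T(E_n)=\RR^T(E_0)$; as $T\to0$ the terms $TL$ and $T\FF$ vanish, so $v^T(1)\to u_0+g$. Hence, for all small enough $T\in(0,T_0)$, the set $\RR^T(E_0)$ contains a point within $\e/2$ of $u_0+u_1\I_\OO$, so the reachable set (before closure) at time $1$ of the $T$-scaled equation with $\HH(\KK)$-valued controls contains a point within $\e$ of it. Undoing the rescaling produces $\eta\in L^2([0,T];\HH(\KK))$ for which the solution of \eqref{0.1}--\eqref{0.2}, unique by well-posedness of the CGL equation, satisfies $\|u(T)-u_0-u_1\I_\OO\|_{L^2}<\e$.
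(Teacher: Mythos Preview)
Your overall plan---Agrachev--Sarychev saturation, an analytic ``extension'' step, and passage to the $\chi^n\to M^n\I_\La$ limit---matches the paper's, and your saturation argument (generating all frequencies from $\KK$ and raising the power of~$\chi$ via the constant mode) is essentially Step~3 of the paper's proof of Theorem~\ref{T:3.5}. However, the analytic core you propose is organised differently from the paper and, as written, contains a genuine gap.

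\textbf{The paper's mechanism.} The paper does not run a convexification at fixed time and then rescale. It works directly in small time via Proposition~\ref{P:3.9}: for the \emph{shifted} equation $\dot v+L(v+\zeta)+B(v+\zeta)=h+\chi\eta$ one has
\[
\RR_\delta\bigl(u_0,\ \delta^{-1/q}\zeta,\ h+\delta^{-1}\chi\eta\bigr)\ \longrightarrow\ u_0+\chi\eta-B(\zeta)\qquad\text{in }H^s\ \text{as }\delta\to0^+,
\]
with $q=2p+1=5$. The point is that a \emph{large constant shift over a short time} balances exactly: $B(\delta^{-1/q}\zeta)\cdot\delta=B(\zeta)$, while all lower-order cross terms carry extra positive powers of~$\delta$ and vanish. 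Together with the identity $\RR_t(u_0+\zeta,0,h)=\RR_t(u_0,\zeta,h)+\zeta$, this yields by induction controllability in small time to $u_0+\chi^{q^N}\HH_N'$; the power $\chi^{q^N}$ appears because each shift must already lie in $\chi^{q^{N-1}}\HH_{N-1}'$.

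\textbf{Why your convexification, as stated, fails.} You propose forcings $\lambda^{1/5}\phi_j(\lambda t)\zeta_j$ on the fixed interval $[0,1]$, ``the scaling exponent $1/5$ being dictated by the fifth degree''. But the trajectory perturbation produced by such a forcing is $O(\lambda^{1/5-1})=O(\lambda^{-4/5})$ (one gains a factor $\lambda^{-1}$ when integrating the fast oscillation), so it vanishes as $\lambda\to\infty$; feeding it into the quintic $\FF$ gives a contribution of order $\lambda^{-4}$, not $B(\zeta)$. In the NS papers you cite, the mechanism works because the nonlinearity is quadratic: a state perturbation of size $\lambda^{-1/2}$ produces, via $B(\cdot,\cdot)$, a term of size $\lambda^{-1}$ whose fast-time average is nonzero. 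For degree $q\ge 3$ there is no exponent~$\alpha$ making the oscillating forcing $\lambda^\alpha\phi(\lambda t)\zeta$ single out the pure top-order term $B(\zeta)$ while killing the cross terms; one checks easily that the moments $\overline{\Psi^k}$ of a real primitive~$\Psi$ cannot all vanish for $1\le k\le q-1$ with $\overline{\Psi^q}\neq0$, since $\overline{\Psi^2}=0$ forces $\Psi\equiv0$. This is precisely why the paper (following \cite{VN-2019A}) replaces oscillation by the short-time/large-shift limit of Proposition~\ref{P:3.9}. So the step you flag as ``the main obstacle'' is not merely unproved: the scaling you write down does not produce the required limit, and your fixed-time convexification $\RR(E)=\RR(\widehat E)$ would need a different argument.

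\textbf{Minor points.} Your power-raising trick $\BB(\chi^a\phi,\chi\mek,\chi\mek,\chi\mek,\chi\mek)\in\C\,\chi^{a+4}\phi$ only yields exponents in an arithmetic progression modulo~$4$, not ``all large~$n$''; this is harmless for the limit but should be stated correctly. The paper bypasses this bookkeeping by tracking only the specific powers $\chi^{q^N}$. Finally, once the correct short-time mechanism is in place, the separate time-rescaling step becomes unnecessary: small time is built into Proposition~\ref{P:3.9}.
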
 
 In other words, this theorem allows to control approximately the   trajectory on $\OO$      while keeping it close to the          initial condition on   $\T^3\setminus \OO$.
 The   local nature  of the nonlinearity in  the CGL equation is again important for the arguments, and the   problem  is open in the case of the NS system;  see~the 7$^\text{th}$~problem formulated by Agrachev in~\cite{Agrachev-2014}.~In Section~\ref{S:3}, we prove different extensions of Theorem~B   in a   more general setting.~In particular, we~consider the equation in arbitrary space dimension  and the     degree of the nonlinearity is arbitrary,  so~the equation is not necessarily globally well-posed.~We~refer the reader to that section for more details  and a short literature review on control problems with finite-dimensioal~forces.

  This paper is organised as follows.~In Section~\ref{S:1}, we briefly recall the formulation of the  abstract criterion  of~\cite{KNS-2018} and apply it to establish exponential mixing for the CGL equation.~In~Section~\ref{S:2}, we verify the non-degeneracy condition by showing that the~linearised CGL equation is    almost surely approximately controllable.~Section~\ref{S:3} is devoted to the  study  of   approximate controllability of  the nonlinear  CGL equation.~Finally, in Section~\ref{S:4}, we give examples of   saturating~spaces for both linear and nonlinear control problems.

 \subsubsection*{Acknowledgement}

  The author thanks Armen Shirikyan for helpful discussions.
 This research   was supported by the ANR   grant NONSTOPS
ANR-17-CE40-0006-02.

 \subsubsection*{Notation}

    Let $X$ be a   Polish space, that is, a complete separable metric space.~We denote by~$d_X$ the metric on $X$ and by  $B_X(u, R)$ the closed ball of radius $R>0$ centred at~$u\in X$.~The  Borel $\sigma$-algebra on $X$ is denoted by  ${\cal B}(X)$ and the set of Borel probability measures    by   $\PP(X)$.~We use the following spaces, metrics, and norms.

   \smallskip
\noindent
$C_b(X)$    is the space of continuous functions $f:X\to \C$ endowed with the norm~$\|f\|_{\ty}=\sup_{u\in X}|f(u)|$.~We write $C(X)$, when $X$ is compact.

  \smallskip
\noindent
$L_b(X)$   is the space of     functions $f\in C_b(X)$    such that
$$
 \|f\|_{L(X)}=\|f\|_\infty+\sup_{u\neq v} \frac{
{|f(u)-f(v)|}}{d_X(u,v)}<\ty.
$$ 
The dual-Lipschitz
metric on
   $\PP(X)$ is defined by    
\begin{equation}\label{0.7}
  \|\mu_1-\mu_2\|_{L(X)}^*=\sup_{\|f\|_{L(X)}\le 1} |\lag f,\mu_1\rag-\lag f,\mu_2\rag|, \quad \mu_1, \mu_2\in \PP(X),
\end{equation}
  where  $\lag f,\mu\rag=\int_X f(u)\,\mu(\dd u)$.~Now, assume that  $X$ is a Banach  space endowed with a
norm $\|\cdot\|_X$ and let $J_T=[0, T ]$.    

  \smallskip
\noindent 
$L^p(J_T;X)$,   $1\leq p<\infty$  is the
space of measurable functions $f: J_T \rightarrow X$ such~that
$$
\|f\|_{L^p(J_T;X)}=\left(\int_0^T \|f(t)\|_X^p\dd t
\right)^{\frac1p}<\ty.
$$ 

\noindent 
$L^p_{\text{loc}}(\R_+;X)$,  $1\leq p<\infty$ is the space of measurable functions $f: \R_+ \rightarrow X$ such that $f|_{J_T} \in L^p(J_T;X)$ for any  $T>0$.

 \smallskip
\noindent 
 $C(J_T;X)$ is the space of continuous functions $f:J_T\to X$ endowed with the~norm 
$$
\|f\|_{C(J_T;X)}=\sup_{t\in J_T} \|f(t)\|_X.
$$

 \noindent
 $L^2=L^2(\T^d; \C)$  and $H^s=H^s(\T^d; \C),$ $s\ge0$    are the   usual Lebesgue and Sobolev    spaces of functions  $f:\T^d\to  \C$.
  We   consider~$L^2$ as a real Hilbert space with the scalar product and the norm
$$
(u,v)_{L^2}=\Re\int_{\T^d}u(x) \bar v(x)\,\dd x, 	\quad\quad \|u\|_{L^2}=\sqrt{(u,u)_{L^2}}
$$
and endow the   spaces~$H^s$ with the corresponding  scalar products $(\cdot,\cdot)_{H^s}$ and    norms~$\|\cdot\|_{H^s}$.~Throughout this paper,   $C$ denotes   unessential positive constants that may change from line to line.

        \section{Exponential mixing}\label{S:1}

 We start this section by  recalling the formulation of the  
     abstract criterion   established in~\cite{KNS-2018}. Then we explain how it is  applied to prove   exponential mixing for the CGL equation with localised noise.
     
  \subsection{Abstract criterion}\label{S:1.1}

In this subsection, we consider  a random dynamical~system of the form   
\begin{equation} \label{E:1.1}
u_k=S(u_{k-1},\eta_k), \quad k\ge1,
\end{equation}
where $S:H\times E\to H$ is a continuous mapping,~$H$ and~$E$ are real separable  Hilbert spaces, and~$\{\eta_k\}$ are   i.i.d. random variables in~$E$.~We assume   that the law~$\ell$ of~$\eta_k$ has a compact support in~$E$, denoted by $\KK$, and there is a compact set~$X\subset H$ such that $S(X\times\KK)\subset X$. We consider the Markov process~$(u_k,\IP_u)$ obtained by restricting the system~\eqref{E:1.1} to the set~$X$. The~associated   Markov operators are denoted by~$\PPPP_k:C(X)\to C(X)$ and~$\PPPP_k^*:\PP(X)\to\PP(X)$.~Recall that $\mu \in \PP(X)$ is    a  stationary measure  if~$\PPPP^*_1\mu=\mu$.   We assume that the following conditions are satisfied for $S$ and $\{\eta_k\}$.

\smallskip
\begin{description}
\item[\hypertarget{H1}{(H$_1$)}] There   is   a  Banach space $ V $ that is compactly embedded into~$H$ such that
the mapping $S:H\times E\to V$ is twice continuously differentiable  and its~derivatives      are bounded on bounded subsets of~$H\times E$.~Furthermore,
 the mapping~$\eta\mapsto S(u,\eta)$ is analytic from~$E$ to~$H$ for any fixed~$u\in H$,    and the derivatives~$(D_\eta^jS)(u,\eta)$ are continuous in~$(u,\eta)$ and   bounded on bounded subsets of~$H\times E$.

\item[\hypertarget{H2}{(H$_2$)}] 
  There is  constant $a\in(0,1)$ and elements  $\hat u\in X$ and $\hat \eta\in\KK$   such that 
\begin{equation} \label{E:1.2}
\| S ( u, \hat \eta) - \hat u\|_H\le a \,\|u-\hat u\|_H,  \quad u\in X. 
\end{equation} 
\end{description}
 For any   $u\in X$, let~$\KK^u$ be the set of  elements $\eta\in\KK$ such that the image of the mapping $(D_\eta S)(u,\eta):E\to H$ is dense in~$H$. Then $\KK^u\in \BB(E)$.
\begin{description}
\item[\hypertarget{H3}{(H$_3$)}] 
  We have $\ell(\KK^u)=1$ for any $u\in X$. 

\item[\hypertarget{H4}{(H$_4$)}]
  The random variables $\eta_k$ are of the~form
\begin{equation} \label{E:1.3}
\eta_k=\sum_{j=1}^\infty b_j\xi_{jk}e_j, \quad k\ge1,
\end{equation}
where~$b_j>0$ are such that 
$\sum_{j=1}^\infty b_j^2<\infty$, $\{e_j\}$ is an orthonormal basis in the space $E$, and    
 $\xi_{jk}$ are independent real-valued random variables such that~$|\xi_{jk}|\le1$ almost surely.
Moreover, the law of $\xi_{jk}$ has a   Lipschitz-continuous density~$\rho_j$ with respect to the Lebesgue measure. 
\end{description}
    The following  is Theorem~1.1 in~\cite{KNS-2018}. 
\begin{theorem} \label{T:1.1}
Under the   Conditions~{\rm\hyperlink{H1}{(H$_1$)}--\hyperlink{H4}{(H$_4$)}},  the   process $(u_k,\IP_u)$ has a unique stationary measure $\mu\in \PP(X)$. Moreover, $\mu$ is exponentially mixing in the   sense that  there are numbers~$\sigma>0$ and $C>0$   such that
\begin{equation} \label{EEER1.6}
\|\PPPP_k^*\lambda-\mu\|_{L(X)}^*\le Ce^{-\sigma k},  \quad \lambda\in\PP(X),\, k\ge 1, 
\end{equation}where $\|\cdot\|_{L(X)}^*$ is the dual-Lipschitz metric  defined by~\eqref{0.7}.
\end{theorem}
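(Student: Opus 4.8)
The result is Theorem~1.1 of~\cite{KNS-2018}; the plan is to prove it by a coupling argument that exploits the fact that~\hyperlink{H2}{(H$_2$)} supplies a globally attracting ``equilibrium'' $(\hat u,\hat\eta)$, while~\hyperlink{H3}{(H$_3$)}--\hyperlink{H4}{(H$_4$)} force the transition kernels issued from nearby points to overlap. Existence of a stationary measure $\mu\in\PP(X)$ is the easy part: $X$ is compact and, by~\hyperlink{H1}{(H$_1$)}, $\PPPP_1$ is Feller, so any weak limit of the averaged measures $\frac1n\sum_{k=0}^{n-1}\PPPP_k^*\delta_u$ is stationary. For the recurrence step, one first observes that by~\hyperlink{H2}{(H$_2$)} and the continuity of $S$ there are $r>0$ and $C>0$ with $\|S(u,\eta)-\hat u\|_H\le a\|u-\hat u\|_H+Cr$ for all $u\in X$ and $\|\eta-\hat\eta\|_E\le r$; since $\hat\eta\in\KK=\supp\ell$, the event $\{\|\eta_k-\hat\eta\|_E\le r\}$ has probability $p_r>0$, so iterating over $m$ consecutive steps brings the chain into the ball $B_H(\hat u,\delta)$ (for $r$ small and $m=m(\delta)$ large) with probability at least $p_r^{\,m}>0$, uniformly over the starting point in $X$.

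The core step is a one-step coupling estimate near $\hat u$: the key claim is that there are $\delta>0$ and $p_0>0$ such that, for all $u,u'\in B_H(\hat u,\delta)$, one can couple two copies of $\eta_1$ into a pair $(\eta,\eta')$ with laws $(\ell,\ell)$ and find an event $\AA$ with $\IP(\AA)\ge p_0$ on which $S(u,\eta)=S(u',\eta')$. To construct it, one solves $S(u',\eta+\zeta)=S(u,\eta)$ for a small correction $\zeta=\zeta(u,u',\eta)$; since $(D_\eta S)(u,\eta)$ has, by~\hyperlink{H3}{(H$_3$)}, only dense (not surjective) image, one first solves the finite-dimensional projections of this equation, with $\zeta$ ranging over $\lspan\{e_1,\dots,e_N\}$, via the implicit function theorem, absorbing the infinite tail (whose size in $H$ is controlled by $\sum_{j>N}b_j^2$, finite by~\hyperlink{H4}{(H$_4$)}) into the error, and then passes to the limit $N\to\infty$. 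The resulting map $\eta\mapsto\eta+\zeta$ is close to the identity, at scale proportional to $\|u-u'\|_H$, so it pushes $\ell$ to a measure close to $\ell$ in total variation (this is where the Lipschitz continuity of the densities $\rho_j$ in~\hyperlink{H4}{(H$_4$)} is used, to keep the pushforward absolutely continuous with controlled density), and a maximal coupling of $\ell$ with $(\Id+\zeta)_*\ell$ then realises $\eta'=\eta+\zeta$, hence $S(u',\eta')=S(u,\eta)$, on an event of probability $\ge 1-\|\ell-(\Id+\zeta)_*\ell\|_{\mathrm{var}}\ge p_0$. The smoothness and analyticity in~\hyperlink{H1}{(H$_1$)} serve to make all these estimates depend continuously on $(u,u')$, so that $p_0$ can be taken uniform over the whole ball and not just at $\hat u$.

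These two ingredients are then assembled in the usual way. One builds a coupling $(\boldsymbol u_k,\boldsymbol u'_k)$ of two copies of the chain which alternately drives the pair into $B_H(\hat u,\delta)^2$ (feeding both components with noise near $\hat\eta$) and then applies the one-step coupling above; the moment $\boldsymbol u_k=\boldsymbol u'_k$ occurs, the same noise is fed to both components and they remain equal forever. Each ``attempt'' has a fixed length $m+1$ and succeeds with probability at least $p_r^{\,m}p_0>0$, so the coincidence time $\tau=\inf\{k:\boldsymbol u_k=\boldsymbol u'_k\}$ satisfies $\IP\{\tau>k\}\le Ce^{-\sigma k}$ by a renewal estimate. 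Taking $\boldsymbol u_0\sim\lambda$ and $\boldsymbol u'_0\sim\mu$, the coupling inequality gives, for every $f$ with $\|f\|_{L(X)}\le1$,
$$
\bigl|\lag f,\PPPP_k^*\lambda\rag-\lag f,\mu\rag\bigr|=\bigl|\E\bigl(f(\boldsymbol u_k)-f(\boldsymbol u'_k)\bigr)\bigr|\le 2\,\E\,\mek_{\{\tau>k\}}\le 2Ce^{-\sigma k},
$$
which is~\eqref{EEER1.6}; applied to a pair of stationary measures it forces them to coincide, giving uniqueness.

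The main obstacle is the one-step coupling near $\hat u$: turning the qualitative hypothesis~\hyperlink{H3}{(H$_3$)} (dense image of the derivative) into a quantitative, \emph{uniform} lower bound $p_0$ on the probability of exact coincidence $S(u,\eta)=S(u',\eta')$. This is where the finite-dimensional truncation, the total-variation comparison built on the Lipschitz densities of~\hyperlink{H4}{(H$_4$)}, the summability $\sum_j b_j^2<\infty$, and the regularity in~\hyperlink{H1}{(H$_1$)} must all be combined carefully; by comparison, the recurrence and renewal arguments are routine.
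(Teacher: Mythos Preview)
The paper does not prove Theorem~\ref{T:1.1}; it is quoted as Theorem~1.1 of~\cite{KNS-2018} and used as a black box. Your outline of the existence and recurrence parts is fine, but the ``core step'' contains a genuine gap.

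You claim that for $u,u'$ near $\hat u$ one can couple the noises so that $S(u,\eta)=S(u',\eta')$ \emph{exactly} with probability at least $p_0>0$, by solving $S(u',\eta+\zeta)=S(u,\eta)$ for a small $\zeta$. But hypothesis~\hyperlink{H3}{(H$_3$)} only says that $(D_\eta S)(u,\eta)$ has \emph{dense} image in the infinite-dimensional space $H$; this is strictly weaker than surjectivity, and the implicit function theorem does not apply. Your proposed fix---solve the finite-dimensional projections and ``pass to the limit $N\to\infty$''---does not repair this: the projected solutions $\zeta_N$ need not remain bounded or converge, and the quantity $\sum_{j>N}b_j^2$ controls the tail of the \emph{noise} in $E$, not the cokernel of the derivative in $H$. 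With only dense range, exact one-step coincidence is in general impossible, and your renewal scheme based on the hitting time $\tau=\inf\{k:\boldsymbol u_k=\boldsymbol u'_k\}$ collapses.

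The mechanism actually used in~\cite{KNS-2018} is \emph{squeezing} rather than coincidence. The compact embedding $V\hookrightarrow H$ in~\hyperlink{H1}{(H$_1$)} makes the high-mode part $(I-P_N)\bigl(S(u,\eta)-S(u',\eta)\bigr)$ uniformly small on $X\times\KK$ for $N$ large; the dense-range condition~\hyperlink{H3}{(H$_3$)}, combined with the analyticity in $\eta$, yields that for $\ell$-a.e.~$\eta$ a suitable finite-dimensional truncation of $(D_\eta S)(u,\eta)$ is onto $P_NH$, so a small shift $\zeta$ in finitely many directions matches the low modes. The Lipschitz densities in~\hyperlink{H4}{(H$_4$)} then give a uniform lower bound on the probability of the event where $\|S(u,\eta)-S(u',\eta')\|_H\le q\,\|u-u'\|_H$ with some $q<1$. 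Iterating this contraction (not waiting for equality) and controlling the first time squeezing fails gives the exponential bound~\eqref{EEER1.6} in the dual-Lipschitz metric. Your recurrence and Krylov--Bogolyubov steps survive; the local coupling has to be redone along these lines.
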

Usually, in   applications  the mapping $S$ is the resolving operator of a parabolic PDE (see \cite{KNS-2018, KNS-2019,  vnersesyan-2019, BGN-2020}).~Conditions~{\rm\hyperlink{H1}{(H$_1$)} and~{\rm\hyperlink{H2}{(H$_2$)} are  standard     regularity  and dissipativity properties  satisfied for a large class of     equations. The    non-degeneracy Condition~{\rm\hyperlink{H3}{(H$_3$)} is less standard; as~we~will see in Section~\ref{S:2},  it~can be verified using   some control theory arguments.

    \subsection{Formulation and proof}\label{S:1.2}

  Let us turn to the   CGL equation~\eqref{0.1}. In this section,  we   assume   that       
  $\eta$     is a    random process    of  the~form
$$
\eta(t,x)=\sum_{k=1}^\infty \I_{[k-1,k)}(t)\eta_k(t-k+1,x),
$$
where   $\I_{[k-1,k)}$ is the indicator function of the interval $[k-1,k)$, and~$\{\eta_k\}$ are i.i.d.   bounded  random variables in $L^2(J;H^2)$ with $J=[0,1]$. Let 
$$
S:H^1\times L^2(J;H^2)\to H^1, \quad (u_0, \eta_1)\mapsto u(1)
$$ be the time-1 resolving operator of the problem~\eqref{0.1},~\eqref{0.2}, and let~$(u_k,\IP_u)$ be the Markov process defined  by~\eqref{E:1.1}. Then the following functional
  $$
  \scH(u)=\int_{\T^3}\left( \frac12 |\nabla u(x)|^2+ \frac{c}{6}| u(x)|^6\right) \dd x, \quad u\in H^1
  $$  is a 
  Lyapunov functional for   the process~$(u_k,\IP_u)$ in the sense that there are numbers $c\in (0,1)$ and $C>0$ such that
  \begin{equation}\label{1.5}
  \E_u\scH(u_k)	\le c^k \left(1+\scH(u)\right)+  C, \quad u\in H^1,\,\, k\ge1, 
  \end{equation}where $\E_u$ is the expectation with respect to~$\pP_u$.~Inequality~\eqref{1.5} is obtained  from estimate~(7.15) in~\cite{KNS-2018}  by taking the expectation.     For any $\la\in \PP(H^1)$, we~set
\begin{gather*}
  \scH(\la)=\int_{H^1} \scH(u)\, \la(\dd u), \\
   \PP_1(H^1)=\left\{\la\in\PP_1(H^1): \scH(\la)<\ty\right\}.
\end{gather*}

  Let~$ \La\subset \T^3$  be  the level set    of  the function~$\chi$  defined by~\eqref{0.5}, and let us assume  that   the interior~$\OO$ of~$ \La$ is non-empty.~We~define a notion of    $\OO$-saturating subspace    as follows.~Let $\HH\subset H^2$ be a finite-dimensional subspace that 
is invariant under complex conjugation, i.e.,~$\bar\zeta\in\HH$ for all~$\zeta\in\HH$, and assume that~$\HH$ contains the function identically equal to~$1$ on~$\T^3$.~Consider a non-decreasing sequence  of finite-dimensional subspaces~$\{\HH_j\}$ of $H^2$ defined as~follows:
\begin{gather}
\HH_0=\HH, \quad \HH_j=\lspan\{\eta,\, \zeta \xi:\, \eta, \zeta\in \HH_{j-1},\, \xi\in \HH\}, \quad j\ge1,\label{1.6}\\\HH_\ty=\cup_{j=0}^\ty \HH_j.\label{1.7}
\end{gather}
\begin{definition}\label{D:1.2}
The subspace
	  $\HH $ is said to be   $\OO$-saturating  if the subspace~$ \HH_\ty $ restricted to the set~$\OO$     is dense in~$L^2(\OO;\C)$.
\end{definition}
Next, we recall the notion  of observable function  introduced in~\cite{KNS-2018}.  
\begin{definition} \label{D:1.3}
Let~$\HH \subset H^2$ be a finite-dimensional subspace,  and let  $\{\varphi_l\}_{l\in\II}$ be an orthonormal basis in~$\HH$ with respect to the scalar product $(\cdot, \cdot)_{H^2}$.	A function~$\zeta\in L^2(J_T;\HH)$, where $J_T=[0,T]$,  is said to be    observable   if for any Lipschitz-continuous functions~$a_l:J_T\to  \R$, $l\in\II$ and any continuous function~$b:J_T\to  \R$,   	the equality 
\begin{equation}\label{1.8}
		\sum_{l\in\II}a_l(t)(\zeta(t),\varphi_l)_{H^2}-b(t)=0\quad\mbox{in $L^2(J_T;\R)$}
\end{equation}
	implies that $a_l= 0 $, $l\in\II$ and~$b= 0$ on $J_T$.  
\end{definition}
It is easy to see that this definition   does not depend on the choice of the basis $\{\varphi_l\}$ in $\HH$.  
 \begin{theorem} \label{T:1.4}
 Let $\chi:\T^3\to \R_+$ be a smooth function such that $\OO\neq \emptyset$, and
	let  $\HH\subset H^2$ be an $\OO$-saturating subspace.~Assume that~$\{\eta_k\}$ are   i.i.d. random variables   in $E=L^2(J;\HH)$ such that the following properties hold. 
\begin{description}
	\item [$\bullet$]  
The law~$\ell\in\PP(E)$ of the random variable~$\eta_k$ has a compact support $\KK$ in $E$ containing the zero. Moreover,  Condition~\hyperlink{H4}{{\rm(H$_4$)}} is satisfied. 
	\item [$\bullet$]
There is $T\in(0,1)$ such that  the restriction of~$\eta_k$ to the interval~$J_T$ is almost surely observable.
\end{description}	
Then, for any  $\nu, \gamma,c>0$, the Markov process~$(u_k,\IP_u)$ has a unique stationary measure~$\mu\in\PP(H^1)$,  and  there are numbers~$\varkappa>0$ and $C>0$ such that
\begin{equation} \label{1.9}
\|\PPPP_k^*\lambda-\mu\|_{L(H^1)}^*\le Ce^{-\varkappa k} \left( 1+\scH(\la)\right), \quad \lambda\in\PP_1(H^1),\, k\ge 1. 
\end{equation}
\end{theorem}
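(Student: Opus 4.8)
The plan is to deduce Theorem~\ref{T:1.4} from the abstract criterion of Theorem~\ref{T:1.1}, applied in the dissipative form with the Lyapunov functional~$\scH$ that is established for the CGL equation in~\cite{KNS-2018}. First, combining the Lyapunov bound~\eqref{1.5} with the parabolic smoothing estimates for~\eqref{0.1} --- the noise is bounded and takes values in $L^2(J;\HH)\subset L^2(J;H^2)$, so the dynamics has an absorbing ball in~$H^2$ --- one obtains a compact set $X\subset H^1$ with $0\in X$ such that $S(X\times\KK)\subset X$ and every bounded set of initial data is eventually mapped into~$X$. The exponential mixing on~$X$ furnished by Theorem~\ref{T:1.1} then upgrades to~\eqref{1.9} on $\PP_1(H^1)$ by the standard Lyapunov argument. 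It thus remains to verify Conditions~\hyperlink{H1}{(H$_1$)}--\hyperlink{H3}{(H$_3$)} for the time-$1$ resolving operator $S$ on~$X$ (Condition~\hyperlink{H4}{(H$_4$)} is part of the hypotheses). Conditions~\hyperlink{H1}{(H$_1$)} and~\hyperlink{H2}{(H$_2$)} follow exactly as in~\cite{KNS-2018}: the fixed smooth multiplier~$\chi$ does not affect the well-posedness, smoothing, and analytic-dependence estimates, so~\hyperlink{H1}{(H$_1$)} holds with $V=H^{1+\delta}$; and for~\hyperlink{H2}{(H$_2$)} one takes $\hat u=0$ and $\hat\eta=0\in\KK$, for which~\eqref{0.1} reduces to the unforced CGL equation, whose zero solution is globally exponentially stable in~$H^1$ since $\Re\int_{\T^3}ic|u|^4u\,\bar u\,\dd x=0$ makes the nonlinearity energy-neutral while the damping $\gamma u$ and the dissipation provide the decay.

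The heart of the proof is Condition~\hyperlink{H3}{(H$_3$)}: for $\ell$-a.e.~$\eta$ and every $u\in X$, the image of $(D_\eta S)(u,\eta):L^2(J;\HH)\to H^1$ must be dense. Fixing $u\in X$ and $\eta\in\KK$ and letting $u(\cdot)$ be the corresponding solution of~\eqref{0.1},~\eqref{0.2} on $J$, one has $(D_\eta S)(u,\eta)\zeta=v(1)$, where $v$ solves the linearised problem
\begin{equation*}
\p_tv-(\nu+i)\Delta v+\gamma v+ic\bigl(3|u|^4\,v+2|u|^2u^2\,\bar v\bigr)=\chi\,\zeta,\qquad v(0)=0 .
\end{equation*}
Working in the real Hilbert space~$L^2$, and writing $(v(1),q)_{L^2}=\int_0^1(\zeta(t),\chi\,p(t))_{L^2}\,\dd t$ with $p$ the solution on~$J$ of the backward adjoint linearised equation satisfying $p(1)=q$, a duality argument (using parabolic smoothing to pass from~$H^1$ to~$L^2$) shows that~\hyperlink{H3}{(H$_3$)} is equivalent to the following unique continuation statement: \emph{if $p$ solves the backward adjoint linearised equation on $J\times\T^3$ and $(\chi\,p(t),\varphi)_{L^2}=0$ for all $\varphi\in\HH$ and a.e.~$t\in J$, then $p\equiv0$} --- and it suffices that this hold whenever $\eta|_{J_T}$ is observable.

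I would prove this unique continuation statement in two stages. On the subinterval $J_T$ on which $\eta|_{J_T}$ is observable, one starts from the identities $t\mapsto(\chi\,p(t),\varphi)_{L^2}=0$, $\varphi\in\HH$, and uses the equation for~$p$: the principal and lower-order terms bring in $\chi$, $\varphi$ and their derivatives, while the linearised nonlinearity --- whose coefficients are polynomials in $u,\bar u$, driven on~$J_T$ by the $\chi\,\HH$-valued control --- produces new identities pairing $\chi\,p(t)$ with products of elements of~$\HH$ and previously generated functions, which is precisely the algebra encoded in the recursion~\eqref{1.6}. Here the observability of $\eta|_{J_T}$ is what lets one disentangle the time-dependent coefficients (which are only as regular as the trajectory~$u$, hence not freely differentiable) and conclude that each spatial relation holds separately for a.e.~$t$. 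Iterating, one arrives at $(\chi\,p(t),\varphi)_{L^2}=0$ for all $\varphi\in\HH_\infty$ and a.e.~$t\in J_T$; since~$\HH$ is $\OO$-saturating, $\HH_\infty|_\OO$ is dense in $L^2(\OO;\C)$, so $\chi\,p(t)$ vanishes on~$\OO$, and because $\chi\equiv M>0$ on~$\OO$ so does $p(t)$, for a.e.~$t\in J_T$. In the second stage, $p$ solves a linear parabolic equation on $J\times\T^3$ with bounded coefficients (as $u\in X$ is bounded in $H^2\hookrightarrow C(\T^3)$) and principal part $(\nu-i)\Delta$, $\nu>0$; it vanishes on the non-empty open subcylinder $(0,T)\times\OO$, so the unique continuation property for linear parabolic equations together with backward uniqueness in time yields $p\equiv0$ on $J\times\T^3$, whence $q=p(1)=0$. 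This gives $\ell(\KK^u)=1$ for every $u\in X$, i.e.~Condition~\hyperlink{H3}{(H$_3$)}, and Theorem~\ref{T:1.1} applies.

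I expect the first stage of the unique continuation argument to be the main obstacle: one has to organise the iteration so that the multiplier~$\chi$ and the polynomial-in-$u$ coefficients interact cleanly with the recursion~\eqref{1.6} for~$\HH_j$, and extract pointwise-in-$t$ spatial identities from time-integrated ones via the observability of~$\eta$. It is also exactly here that the localisation of~$\chi$ enters: the algebraic step controls~$p$ only on~$\OO$, which is why the additional parabolic unique continuation input --- not needed in the non-localised case of~\cite{KNS-2018} --- becomes necessary.
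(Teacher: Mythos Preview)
Your overall architecture is right and coincides with the paper's: pass to a compact absorbing set $X\subset H^1$ (bounded in~$H^2$), verify~\hyperlink{H1}{(H$_1$)} and~\hyperlink{H2}{(H$_2$)} exactly as in the non-localised case of~\cite{KNS-2018}, and reduce~\hyperlink{H3}{(H$_3$)} by duality to a unique continuation statement for the backward adjoint solution, to be proved by an observability-driven iteration on~$J_T$ followed by parabolic unique continuation. The two-stage plan and the identification of the obstacle are accurate.

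The gap is in what the first stage actually produces. You assert that the iteration yields $(\chi\,p(t),\varphi)_{L^2}=0$ for all $\varphi\in\HH_\infty$, but this is not what differentiation gives. The only way the noise $\eta$ enters the coefficients of the adjoint equation is through $\partial_t\tilde u=\cdots+\chi\eta$; hence each time you differentiate an identity $(\chi^m\zeta,p(t))_{L^2}=0$ in~$t$ and invoke observability to isolate the $\eta$-term, you gain one extra power of~$\chi$. In the paper's bookkeeping, four successive differentiations are needed to reduce the polynomial coefficient to the constant fifth derivative $B_5(\zeta,\xi,1,1,1)$, so from $(\chi^m\zeta,w)_{L^2}=0$ for $\zeta\in\HH_k$ one obtains $(\chi^{m+4}\xi,w)_{L^2}=0$ for $\xi\in\HH_{k+1}$. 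The output of the iteration is therefore $(\chi^{p_n}\varphi,w(t))_{L^2}=0$ for $\varphi\in\HH_\infty$ with $p_n\to\infty$, and from this one \emph{cannot} directly read off $w|_\OO=0$: the integrals are over~$\T^3$, not~$\OO$. The missing step is to divide by $M^{p_n}$ and pass to the limit by dominated convergence, using that $0\le\chi/M\le1$ with equality precisely on~$\La\supset\OO$; this yields $(\varphi,w(t))_{L^2(\OO)}=0$ for all $\varphi\in\HH_\infty$, and only then does $\OO$-saturation give $w|_\OO=0$. This normalise-and-take-the-limit manoeuvre is the one genuinely new idea forced by the localisation, and it is absent from your outline.

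A minor remark on the second stage: you close by combining spatial unique continuation with backward uniqueness to propagate from $p\equiv0$ on $J_T\times\T^3$ to $p(1)=0$. The paper instead sets up the duality at time~$T$, proves density of $\{A_Tg\}$ in~$L^2$, and then invokes a separate density result (Proposition~7.2 in~\cite{KNS-2018}) together with parabolic regularisation to pass to density in~$H^1$ at time~$1$. Your route via backward uniqueness for linear parabolic equations with bounded coefficients is a legitimate alternative, but it is a different ingredient from what the paper uses.
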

\begin{proof}  By Theorem~7.4 in~\cite{KNS-2018}, the   process~$(u_k,\IP_u)$  possesses a compact invariant absorbing set~$X\subset H^1$ that is closed and bounded in~$H^2$.~Conditions~\hyperlink{H1}{(H$_1$)}--\hyperlink{H4}{(H$_4$)} in Theorem~\ref{T:1.1} are satisfied for the restriction of~$(u_k,\IP_u)$ to~$X$ if we take~$H=H^1$ and~$E=L^2(J;\HH)$.~Indeed, the verification of Conditions~\hyperlink{H1}{(H$_1$)} and~\hyperlink{H2}{(H$_2$)} is carried out in the same way as in the case~$\chi\equiv 1$ considered in Theorem~4.7 in~\cite{KNS-2018}, Condition~\hyperlink{H3}{(H$_3$)} is verified in the next section,     and~\hyperlink{H4}{(H$_4$)} holds by assumption.~Thus, by Theorem~\ref{T:1.1}, we have      exponential mixing~\eqref{EEER1.6}.  Then
	the validity of~\eqref{1.9}   follows from \eqref{1.5} and the      regularisation property  of the CGL equation;    e.g., see Section~3.2.4 in~\cite{KS-book} for a similar argument in the case of the NS system. 
   \end{proof}

   	   For any finite set~$\II\subset\Z^3$ containing the zero vector, let us consider the     subspace 
\begin{equation}\label{1.10}
\HH(\II)=\lspan\{\cos\lag l,x\rag,\, \sin\lag l,x\rag:l\in\II\}.
\end{equation}
Let  $\tilde \II$ be the set   of   all 
      linear combinations of vectors in~$\II$ with integer coefficients. Recall that~$\II$ is a generator if $\tilde \II=\Z^3$.~By Proposition~\ref{P:4.1}, the subspace~$\HH(\II)$ is  $\OO$-saturating   if and only if~$\II$ is a generator.  In particular,   $\HH(\KK)$ is  $\OO$-saturating, where~$\KK\subset \Z^3$ is defined by~\eqref{0.4}.~Furthermore, in   Section~5.2 in~\cite{KNS-2018}, it is proved that  the Haar process~\eqref{0.3}   satisfies the observability condition. Thus,~Theorem~A formulated in the Introduction follows as a consequence of    Theorem~\ref{T:1.4}.

  \section{Controllability of the linearised equation}\label{S:2}

We use the same notation as in the previous section. The objective of this section is to   prove the following~result.
\begin{theorem}\label{T:2.1}
Under the conditions of Theorem~\ref{T:1.4},  	for any $u\in X$ and $\ell$-almost every~$\eta\in E$, the image of the linear mapping $(D_\eta S)(u,\eta):E\to H^1$ is dense in~$H^1$.
\end{theorem}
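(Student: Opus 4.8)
The plan is to pass to the adjoint of the linearised equation, reduce the density of the range of $(D_\eta S)(u,\eta)$ to a unique continuation property, and then establish the latter by combining a bootstrap based on the \emph{local} structure of the CGL nonlinearity with a unique continuation theorem for linear parabolic equations.

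First I would identify the range. If $u(\cdot)$ denotes the solution of \eqref{0.1}, \eqref{0.2} issued from $u$ with control $\chi\eta$, then $(D_\eta S)(u,\eta)\zeta=v(1)$, where $v$ solves the linearised problem
\[
\p_tv-(\nu+i)\Delta v+\gamma v+ic\bigl(3|u|^4v+2u^2|u|^2\bar v\bigr)=\chi(x)\,\zeta(t,x),\qquad v(0)=0,
\]
with $\zeta\in E=L^2(J;\HH)$. The range is a linear subspace of $H^1$, so it is dense iff its orthogonal complement vanishes; by the standard duality between the linearised equation and its backward adjoint, this is equivalent to the implication: if $p$ is the solution on $J=[0,1]$ of the adjoint parabolic equation with terminal datum $q$ (taken in the appropriate dual space, so that $p(t)$ is smooth for $t<1$ by parabolic regularisation) and $\chi(x)p(t,x)$ is orthogonal to $\HH$ in $L^2(\T^3)$ for a.e.\ $t\in J$, then $q=0$. (When $\HH$ consists of real-valued functions only $\Re p$ is directly constrained; this is dealt with below using the coupling of the equations for $\Re p$ and $\Im p$.)

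The crux is to turn the finitely many constraints $\chi p(t)\perp\HH$ into $p\equiv0$. Here I would exploit that the linearisation of $u\mapsto|u|^4u$, hence the zeroth-order part of the adjoint equation, acts by pointwise multiplication: differentiating the relation $\chi p(t)\perp\HH$ in $t$ (legitimate for $t\in(0,1)$ by parabolic smoothing), substituting the equation for $p$, and iterating, one upgrades the orthogonality of $\chi p(t)$ to $\HH$ into its orthogonality to each of the spaces $\HH_j$ in \eqref{1.6}, hence to $\HH_\infty$; the combinatorics of this step are the ``trigonometric Lie-algebraic'' computations matching the product structure \eqref{1.6}--\eqref{1.7}. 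Since $\HH$ is $\OO$-saturating and $\chi\equiv M>0$ on $\OO$, it follows that $p(t)$ vanishes on $\OO$ for a.e.\ $t$, hence $p$ vanishes on $\OO\times I$ for some nondegenerate $I\subset J$; a unique continuation theorem for linear parabolic equations (applicable since the zeroth-order coefficients $|u|^4$ and $u^2|u|^2$ are bounded) then forces $p\equiv0$ on $\T^3\times J$, so $q=p(1)=0$. This establishes Theorem~\ref{T:2.1}; in fact the argument gives density of the range for every $\eta$ whose trajectory $u_\eta$ is nontrivial, which covers $\ell$-almost every $\eta$ since $\ell$ has a density by Condition~\hyperlink{H4}{(H$_4$)} (for the trivial trajectory the linearised equation has no zeroth-order terms and the bootstrap collapses).

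I expect the bootstrap to be the main obstacle: passing from the finite family of scalar conditions $\chi p(t)\perp\HH$ to the vanishing of $p(t)$ on the open set $\OO$, using the saturation of $\HH$ and the local nonlinearity of the CGL equation in an essential way. The cutoff $\chi$ (in place of $\chi\equiv1$, as treated in \cite{KNS-2018}) makes this more delicate than in the non-localised case, and one must also track the coupling of the real and imaginary parts of the adjoint solution and the limited time-regularity of $p$ near $t=1$, and invoke the parabolic unique continuation theorem in a form valid for the coefficients at hand.
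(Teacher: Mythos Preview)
Your overall architecture (pass to the adjoint, reduce to unique continuation, finish with a parabolic UCP) matches the paper, but the heart of the bootstrap is missing, and this is where the ``$\ell$-almost every $\eta$'' actually enters.

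When you differentiate $(\chi\zeta,w(t))_{L^2}=0$ once and substitute the adjoint equation, you obtain
\[
\bigl(L(\chi\zeta)+\chi\,Q(\tilde u(t);\zeta),\,w(t)\bigr)_{L^2}=0,
\]
whose new zeroth-order part involves $|\tilde u|^4\zeta$ and $|\tilde u|^2\tilde u^2\bar\zeta$, \emph{not} products of elements of~$\HH$. So a single differentiation does not move you from $\HH_k$ to $\HH_{k+1}$. To make progress you must differentiate again; then $\partial_t\tilde u$ appears, and since $\tilde u$ solves \eqref{0.1} with source $\chi\eta$, the identity takes the form
\[
\sum_{l\in\II}a_l(t)\,(\eta(t),\varphi_l)_{H^2}-b(t)=0,
\]
with $a_l(t)=\bigl(\chi^{p+1}B_2(\tilde u;\zeta,\varphi_l),w\bigr)_{L^2}$. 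It is precisely the \emph{observability} hypothesis on $\eta$ (Definition~\ref{D:1.3}, assumed in Theorem~\ref{T:1.4}) that lets you conclude $a_l\equiv0$; four such steps strip away $\tilde u$ entirely and reach $B_5(\zeta,\varphi_l,\varphi_j,\varphi_m,\varphi_n)$, which finally produces products $\zeta\xi$ with $\xi\in\HH$. Your sketch never invokes observability, and your explanation that the exceptional set consists of $\eta$ with ``trivial trajectory'' is not the right mechanism: nontriviality of $\tilde u$ alone does not let you separate the $\eta^l(t)$ from the rest.

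A second, related point: each time you isolate a term via observability you pick up an extra factor of $\chi$ (from the source $\chi\eta$), so the induction yields $(\chi^{p_k}\zeta,w)_{L^2}=0$ for $\zeta\in\HH_k$ with $p_k\to\infty$, \emph{not} $(\chi\zeta,w)_{L^2}=0$ for all $\zeta\in\HH_\infty$. The paper then divides by $M^{p_k}$ and uses dominated convergence to get $w=0$ on~$\OO$; only after that does the parabolic unique continuation of \cite{SS-87} apply. Your version ``$\chi p(t)\perp\HH_j$ for all $j$'' would short-circuit this, but the actual bookkeeping does not give it.
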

 \begin{proof}

Let us denote by $\tilde u\in  W^{1,2}(J;H^1)\cap L^2(J;H^3)$ the solution of Eq.~\eqref{0.1} corresponding to the initial   condition~$\tilde u(0)=u\in X$ and consider the  linearised~problem      
	\begin{equation} \label{2.1}
		\dot v+Lv+Q(\tilde u;v)= \chi g, \quad v(0)=0, 
	\end{equation}
	where
	\begin{align}
	 L&=-(\nu+i)\Delta+\gamma,\label{2.2}\\
	 Q(u;v)&=ic\left(3|u|^4v+2|u|^2u^2\bar v\right).\nonumber
	 \end{align}     For  any $g\in E$, let\footnote{To simplify the notation, we do not indicate the dependence of~$A_t$  and other quantities    on~$\tilde u$.} $A_tg$ be the solution of the problem~\eqref{2.1}.~The~theorem will be~proved if we show that   the vector space~$\{A_1 g:g\in E\}$ is dense in~$H^1$ for~$\ell$-a.e.~$\eta\in E$.~Let us fix a realisation of~$\eta$ and a time 
 $T\in (0,1)$ such that   the observability property holds on the interval~$J_T$.~We~are going to show that  the space $\{A_T g: g\in E\}$ is dense in~$L^2$.~This, combined with the parabolic regularisation and a density property of the set  of solutions of linear parabolic equations proved in Proposition~7.2 in~\cite{KNS-2018},   will imply the required property.

 Let  $R(t,s):L^2\to L^2$,  $0\le s\le t\le T$ be   the resolving operator of the  homogeneous   problem
$$		
\dot v+Lv+Q(\tilde u;v)=0, \quad v(s)=v_0. 
$$
  Then, by   the Duhamel formula, we have
	$$
	A_t g= \int_0^t R(t,s) \chi g(s)\,\dd s. 
	$$ 	
	Moreover,  the function
	\begin{equation}\label{2.3}
		w(s)=R^*(T,s)w_0,
	\end{equation}where $R(t,s)^*:L^2\to L^2$ is the adjoint of the operator $R(t,s)$ in $L^2$, 
    is the solution of the  backward problem 
	\begin{equation}\label{2.4}
	  \dot w-L^*w-  Q^*(\tilde u;w)=0, \quad w(T)=w_0  
	\end{equation} with $L^*=-(\nu-i)\Delta+\gamma$ and 
	$$
	Q^*(\tilde u;w)=ic\left(-3|u|^4w+2|u|^2u^2\bar w\right).
	$$
 Let ${\mathsf P}_\HH:L^2\to L^2$ be      the orthogonal projection onto~$\HH$ in~$L^2$. We need to prove that the image of the linear operator
	$$
	\aA:L^2(J_T; L^2)\to L^2, \quad   \aA=A_T{\mathsf P}_\HH
	$$
	is dense in $L^2$.~It suffices to  show that the kernel of the adjoint operator
	$$
	\aA^*: L^2 \to L^2(J_T; L^2), \quad\aA^* ={\mathsf P}_\HH \chi  R(T,s)^*  
		$$ is trivial.~To prove this,  let  us  take any $w_0\in \Ker \aA^*$.~Then~${\mathsf P}_\HH \chi R (T,t)^*w_0=0$ for any $t\in J_T$, hence (cf.~\eqref{2.3}) 
$$
	(\chi \zeta, w(t))_{L^2}=0,\quad  t\in J_T 
$$
for any $\zeta\in\HH_0=\HH$. Now, assuming that we have the relation
\begin{equation} \label{2.5}
	(\chi^p \zeta, w(t))_{L^2}=0,\quad  t\in J_T 
\end{equation}
 for any~$\zeta\in\HH_k$ and some $p\ge1$, let us   prove that
 \begin{equation} \label{2.6}
	(\chi^{p+4} \xi, w(t))_{L^2}=0,\quad  t\in J_T 
\end{equation}
   for any~$\xi\in\HH_{k+1}$.~Indeed,
differentiating~\eqref{2.5} in $t$ and using Eq.~\eqref{2.4}, we obtain 
\begin{equation} \label{2.7}
	\left(L(\chi^p\zeta)+\chi^p Q(\tilde u(t);\zeta),w(t)\right)_{L^2}=0,\quad t\in J_T.
\end{equation}
   Let us put $\eta^l(t)=(\eta(t),\varphi_l)_{H^2}$, $l\in\II$ and write 
\begin{align*}
\eta(t)&=\sum_{l\in\II}\eta^l(t)\varphi_l.
\end{align*}
 Differentiating~\eqref{2.7} in $t$ and using Eqs.~\eqref{0.1} and~\eqref{2.4}, we get
\begin{gather*}
	\left(L(\chi^{p}\zeta)+\chi^{p} Q(\tilde u;\zeta),\dot w\right)_{L^2}  -\left(\chi^{p} B_2(\tilde u;\zeta,L\tilde u+B(\tilde u)),w\right)_{L^2}\\
	  +\sum_{l\in\II}\left(\chi^{p+1} B_2(\tilde u;\zeta,\varphi_l),w\right)_{L^2}\eta^l(t)=0, 
\end{gather*}
where $B_k (u ; \cdot)$ is the $k^\text{th}$ derivative of~$B(u)=ic|u|^4u$ (so $B_1 (u ; \cdot)=Q(u ; \cdot)$ and~$B_5 (\tilde u ; \cdot)=B_5 (\cdot)$ is independent of $\tilde u$).   Thus, we have~\eqref{1.8}, where 
\begin{align*}
	a_l(t)&=\left(\chi^{p+1} B_2(\tilde u(t);\zeta,\varphi_l),w(t)\right)_{L^2},\\
	b(t)&=	\left(L(\chi^p\zeta)+\chi^p Q(\tilde u(t);\zeta),\dot w(t)\right)_{L^2}\\&\qquad\qquad\qquad -\left(\chi^p B_2(\tilde u(t);\zeta,L\tilde u(t)+B(\tilde u(t))),w(t)\right)_{L^2}. 
\end{align*} 
Then the functions $a_l, l\in \II$ are Lipschitz-continuous and $b$ is continuous, so the  observability assumption implies that  
\begin{equation*}
	\left(\chi^{p+1} B_2(\tilde u(t);\zeta,\varphi_l),w(t)\right)_{L^2}=0,
	\quad  l\in\II,\,\, t\in J_T. 
\end{equation*}
Iterating   the same argument three more times, we get 
$$	\left(\chi^{p+4} B_5(\zeta,\varphi_l,\varphi_j,\varphi_m,\varphi_n),w(t)\right)_{L^2}=0,
	\quad  j,l,m,n\in\II,\,\, t\in J_T.
$$Using the equality
 $$
 B_5(\zeta,\xi,1,1,1)
=12ic\,(3\zeta\xi+\bar\zeta\bar\xi +3\bar\zeta\xi+3\zeta\bar\xi)
$$
and the facts that $1\in \HH$ and the spaces~$\HH$ and~$\HH_k$ are invariant under complex conjugation, we arrive at~\eqref{2.6}.~Thus, we    proved the following property: for~any~$\zeta\in  \HH_\ty  $, there is a sequence of integers $p_n\to +\ty $ as $n\to+ \ty$ such that 
$$
	(\chi^{p_n} \zeta, w(t))_{L^2}=0,\quad  n\ge1,\,\,t\in J_T. 
$$Dividing this equality by $M^{p_n}$, where $M=\max_{x\in \T^3} \chi (x)$,   passing to the limit as~$n\to +\ty$, and using    the Lebesgue theorem on dominated
convergence,  we~obtain
$$
	(\zeta, w(t))_{L^2(\OO; \C)}=0,\quad  t\in J_T. 
$$From the saturation property it follows that $w(t,x)=0$ for any $t\in J_T$ and $x\in \OO$. The unique continuation property for parabolic equations (e.g., see~\cite{SS-87}) implies that~$w(t,x)=0$ for any $t\in J_T$ and $x\in \T^3$.~In particular,   $w(T)=w_0=0$ (see~\eqref{2.4}). Thus~$\Ker \aA^*=\{0\}$, which completes the proof of the~theorem. 
\end{proof}

   \section{Controllability of the nonlinear equation}\label{S:3}

\subsection{Preliminaries}\label{S:3.1}
In this section, we consider the problem of   controllability of the following nonlinear CGL equation  on the torus of arbitrary dimension $d\ge1$:
 \begin{equation} 
	\p_tu+ L u  +B(u)=f(t,x), \quad x\in \T^d,\label{3.1}
\end{equation}
  where  $\nu>0$ and\footnote{In this section, we do not assume that the unforced equation admits   one globally stable equilibrium, so the value  $\gamma=0$ is allowed.} $\gamma\ge 0$ are some parameters, $L$ is defined by \eqref{2.2}, and~$B(u)$ denotes the nonlinear term  $ic|u|^{2p}u$ with arbitrary integer  $p\ge1$ and parameter~$c>0$. This equation is supplemented with the initial condition
   \begin{equation} 
	u(0,x)=u_0(x) \label{3.2}
\end{equation}which is assumed to belong to  
   a Sobolev space~$H^s$ of integer order~$s>d/2$, so that the Cauchy problem is
    locally  well-posed  in the sense of      the   following proposition.  For~any $T>0$,   let us introduce  the space 
    $$
    \XX_T= C(J_T;H^s)\cap L^2(J_T;H^{s+1})
    $$   endowed   with the norm 
    $$
    \|u\|_{\XX_T}=\|u\|_{C(J_T;H^s)}+ \|u\|_{ L^2(J_T;H^{s+1})}.
    $$ Together with Eq.~\eqref{3.1}, we   consider   the following more general equation:
\begin{equation}\label{3.3}
	\p_tu+ L (u+\zeta)  + B(u+\zeta)=f(t,x).	
\end{equation}
   \begin{proposition}\label{P:3.1}
For any~$\hat u_0\in H^s$,~$\hat \zeta \in C
(\R_+;H^{s+1})$, and~$\hat f\in L^2_{\textup{loc}}(\R_+;H^{s-1})$,  there is  a time    $T_*:=T_*(\hat u_0, \hat \zeta, \hat f)>0$ and a unique solution $\hat u$ of the problem \eqref{3.3},~\eqref{3.2} with data~$(u_0, \zeta, f)=(\hat u_0, \hat \zeta, \hat f)$     whose restriction to the interval~$J_T$  belongs  to the space~$\XX_T$ for any $T<T_*$.~Furthermore,  there are   
  constants
 $\delta=\delta(T,\lambda)>0$  and $C=C(T,\lambda)>0$, where   
$$
\lambda= \|\hat \zeta\|_{C(J_T;H^{s+1})}+\|\hat f\|_{L^2(J_T;H^{s-1})}+ \|\hat u\|_{\XX_T},
$$
such that  
\begin{enumerate}
\item[$\bullet$] for any $u_0 \in  H^s$, $\zeta\in C(J_T;H^{s+1}),$ and $f\in L^2(J_T;H^{s-1})$  satisfying  
\begin{equation}\label{3.4}
\|u_0 - \hat u_0\|_s +  \| \zeta - \hat \zeta\|_{C(J_T;H^{s+1})} + \|f-\hat f\|_{L^2(J_T;H^{s-1})}<
\delta,
\end{equation}
  the problem~\eqref{3.3},~\eqref{3.2} has a unique
solution $ u\in \XX_T$;
\item[$\bullet$]
  let $\RR$ be the   mapping  taking  $(u_0,\zeta,f )$ satisfying \eqref{3.4} to the solution~$u$.~Then     
 \begin{align*}
\|\RR(u_0,\zeta,f) -\RR(\hat u_0,\hat\zeta,\hat f)\|_{\XX_T}&\le 
C \,\big(\|u_0-\hat u_0\|_s+
\|\zeta-\hat\zeta\|_{C(J_T;H^{s+1})}\nonumber\\&\quad +\|f-\hat f\|_{L^2(J_T;H^{s-1})}\big).
 \end{align*}
 \end{enumerate}
\end{proposition}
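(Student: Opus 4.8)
The plan is to solve \eqref{3.3}--\eqref{3.2} as a Duhamel fixed-point equation in the space $\XX_T$ for small $T$, and then to deduce the quantitative statement by running the same scheme near the reference solution $\hat u$, subdividing $[0,T]$ when $T$ is not short. Two preliminary facts are needed. First, since $\nu>0$, the operator $-L=(\nu+i)\Delta-\gamma$ generates an analytic semigroup $e^{-tL}$ on every $H^\sigma(\T^d)$ with $\|e^{-tL}\|_{H^\sigma\to H^\sigma}\le 1$, and for $u_0\in H^s$ and $g\in L^2(J_T;H^{s-1})$ the function $t\mapsto e^{-tL}u_0+\int_0^t e^{-(t-r)L}g(r)\,\dd r$ belongs to $\XX_T$ with norm at most $C\big(\|u_0\|_{H^s}+\|g\|_{L^2(J_T;H^{s-1})}\big)$, the constant $C=C(\nu,\gamma,T)$ staying bounded for $T$ in a bounded set; this is the standard parabolic energy estimate, and when $\gamma=0$ the undamped zero Fourier mode is merely transported, contributing only $\sqrt T\,\|u_0\|_{H^s}$ to the $L^2(J_T;H^{s+1})$ part, so nothing changes. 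Second, because $s>d/2$ is an integer, $H^s$ is a Banach algebra invariant under complex conjugation, hence $u\mapsto B(u)=ic|u|^{2p}u$ is a polynomial map of degree $2p+1$ in $u$ and $\bar u$, smooth from $H^s$ to $H^s$, with $\|B(u)-B(v)\|_{H^s}\le C\big(\|u\|_{H^s}^{2p}+\|v\|_{H^s}^{2p}\big)\|u-v\|_{H^s}$; composing with $H^s\hookrightarrow H^{s-1}$ and $\|\cdot\|_{L^2(J_T;H^{s-1})}\le\sqrt T\,\|\cdot\|_{C(J_T;H^{s-1})}$ shows that the Nemytskii operator of $B$ maps bounded subsets of $C(J_T;H^s)$ into $L^2(J_T;H^{s-1})$ with Lipschitz constant $O(\sqrt T)$ on bounded sets. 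Finally, $\hat\zeta\in C(J_T;H^{s+1})$ yields $L\hat\zeta\in C(J_T;H^{s-1})$.

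To obtain $\hat u$ I would apply the contraction mapping principle to
\begin{equation*}
\Phi(u)(t)=e^{-tL}\hat u_0-\int_0^t e^{-(t-r)L}\big(L\hat\zeta(r)+B(u(r)+\hat\zeta(r))-\hat f(r)\big)\,\dd r
\end{equation*}
on a ball $B_{\XX_T}(0,R)$ whose radius $R$ depends only on $\|\hat u_0\|_{H^s}$, $\|\hat\zeta\|_{C(J_1;H^{s+1})}$ and $\|\hat f\|_{L^2(J_1;H^{s-1})}$: using the linear estimate, the $O(\sqrt T)$ factor for the nonlinear and $L\hat\zeta$ terms, and the fact that $\|\hat f\|_{L^2(J_T;H^{s-1})}\to 0$ as $T\to 0$ (absolute continuity of the integral), one checks that $\Phi$ maps this ball into itself and is a $\tfrac12$-contraction of it once $T=:T_*$ is small enough. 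This produces a unique fixed point in the ball; a Grönwall estimate for the difference of any two solutions over a short subinterval then upgrades this to uniqueness in all of $\XX_{T_*}$, and taking $T_*$ maximal by the usual continuation argument yields $\hat u$ with $\hat u|_{J_T}\in\XX_T$ for every $T<T_*$.

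For the quantitative part, fix $T<T_*$ and let $\lambda$ be as in the statement. The ansatz $u=\hat u+w$ transforms \eqref{3.3} into the equation
\begin{equation*}
\p_t w+Lw=-L(\zeta-\hat\zeta)-\big(B(\hat u+w+\zeta)-B(\hat u+\hat\zeta)\big)+(f-\hat f),\qquad w(0)=u_0-\hat u_0,
\end{equation*}
which I would again solve for $w$ as a Duhamel fixed point. Since $T$ need not be short, I split $[0,T]$ into $N=\ceil{T/\tau}$ subintervals of length $\tau=\tau(\lambda)$ chosen so small that on each of them the corresponding Duhamel operator for $w$ is a $\tfrac12$-contraction of a ball of radius of order $1$ around $0$ in $\XX$ over that subinterval — this uses the $O(\sqrt\tau)$ Lipschitz bound for $B$ together with the already known bound $\|\hat u\|_{\XX_T}\le\lambda$. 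Solving successively, the bound $\rho_i$ on the $\XX$-norm of $w$ over the $i$-th subinterval obeys $\rho_i\le C(\lambda)\big(\rho_{i-1}+\delta\big)$ with $\rho_0=\|u_0-\hat u_0\|_{H^s}\le\delta$, where $\delta$ denotes the left-hand side of \eqref{3.4}; hence $\|w\|_{\XX_T}\le C(T,\lambda)\,\delta$, which keeps every step within the radius-of-order-$1$ budget as soon as $\delta$ is below a threshold $\delta_*(T,\lambda)$, so the scheme is well defined. This bound is precisely the solvability claim under \eqref{3.4} together with the Lipschitz estimate for $\RR$ at the point $(\hat u_0,\hat\zeta,\hat f)$, and uniqueness of $u$ in $\XX_T$ follows once more from a Grönwall argument on subintervals.

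The argument lies entirely within classical semilinear parabolic theory. The two steps that need genuine care are the uniform-in-$T$ linear estimate in the $C(J_T;H^s)\cap L^2(J_T;H^{s+1})$ scale — in particular the bookkeeping for the possibly undamped zero Fourier mode when $\gamma=0$, where the pointwise smoothing bound degenerates but the time-integrated one does not — and the interval subdivision in the last paragraph, which is unavoidable because the continuous-dependence statement must hold on a fixed interval $[0,T]$ that need not be short; I expect this subdivision bookkeeping to be the main, though still routine, obstacle.
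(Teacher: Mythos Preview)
Your proposal is correct and follows the standard semilinear parabolic fixed-point scheme that the paper has in mind: the paper does not give a proof but simply refers to Proposition~1 in \cite{VN-2019A}, whose argument is exactly the Duhamel contraction in $\XX_T$ using the algebra property of $H^s$ for $s>d/2$, followed by interval subdivision for the Lipschitz dependence on a fixed interval. Your handling of the two delicate points (the zero mode when $\gamma=0$, and the subdivision bookkeeping) is accurate and matches what is needed.
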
This proposition is proved by literally repeating the arguments of the proof of Proposition~1 in~\cite{VN-2019A}, where parabolic equation is considered with a real-valued polynomially growing nonlinearity.~In what follows, we      assume that the source term  is of the~form $f=h+\chi\eta$:
\begin{equation}\label{3.5}
\p_tu+ L u  +B(u)=h(t,x)+\chi(x)\eta(t,x),
\end{equation}
 where  $\chi:\T^d\to \R_+$ is a smooth function, $h\in L^2_{\text{loc}}   (\R_+;H^{s-1})$ is a given function, and  $\eta$ is a control   taking  values in a finite-dimensional subspace $\HH \subset H^{s+2}$ that is specified below.~For any~$u_0\in H^s$, $T>0$, and~$\zeta \in C(J_T;H^{s+1})$, let $\Theta(u_0,\zeta,T)$ be the set of controls~$\eta \in  L^2(J_T;H^{s-1})$ such that the problem~\eqref{3.3},~\eqref{3.2} has a unique
solution in~$\XX_T$.~From Proposition~\ref{P:3.1} it follows that the set~$\Theta(u_0,\zeta,T)$ is  open in~$L^2(J_T;H^{s-1})$.~Let  $\RR_t$ be the restriction of the resolving operator~$\RR$   at time~$t\in J_T$.

As it is explained in the references~\cite{DR-96, H-1978}, one cannot control the trajectories of Eq.~\eqref{3.5} outside the support of the function~$\chi$.~We prove that the approximate controllability still holds if we restrict the problem to 
   the interior~$\OO\subset \T^d$    of the level set $\Lambda$ of $\chi$ given by 
\begin{equation}\label{3.6}
 \La=\{x\in \T^d:\chi (x)=M\}, \quad \text{where  $M=\max_{x\in \T^d} \chi (x)$.}
\end{equation}   
More precisely,  we use the following notion of   controllability.
\begin{definition}\label{D:3.2}   
  Eq.~\eqref{3.5} is said to be  approximately controllable on the set~$\OO$  in small time
by~$\HH$-valued control if, for any $\e>0$, any $T_0>0$, any $u_0 \in H^s$, and any~$u_1\in L^2$,    there is a time $T\in (0,T_0)$ and  a control $\eta
\in \Theta(u_0,0, T)\cap L^2(J_T;\HH) $ such that
$$
\|\RR_T(u_0,0,h+\chi\eta) -  u_0-u_1 \I_{\OO}\|_{L^2}<\e,
$$    where $\I_{\OO}$ is the indicator function of the set $\OO$.
\end{definition} 
 Let   $\HH \subset H^{s+2}$  be a finite-dimensional subspace that 
is invariant under complex conjugation    and contains the function identically equal to~$1$ on~$\T^d$. Let~us define a    sequence  of finite-dimensional subspaces~$\{\HH_j'\}$ of $H^{s+2}$ by
\begin{gather}
\HH_0'=\HH, \quad \HH_j'=\lspan\left\{B(\zeta):\, \zeta\in \HH_{j-1}'\right\}, \quad j\ge1,\label{3.7}\\\HH_\ty'=\cup_{j=0}^\ty \HH_j'.\label{3.8}
\end{gather}The proof of the below lemma is postponed to   Section~\ref{S:4.2}.
\begin{lemma}\label{L:3.3} The following equality holds:
\begin{equation}
\HH_j'= \lspan\left\{ \zeta_1\cdot\ldots\cdot\zeta_{2p+1}:\,\,\zeta_l\in\HH_{j-1}',\,\, l=1,\ldots,2p+1 \right\}, \quad  j\ge1.
 \label{3.9}
\end{equation}	
\end{lemma}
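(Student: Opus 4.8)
The plan is to prove the two inclusions separately, the nontrivial one being that the span of products $\zeta_1\cdots\zeta_{2p+1}$ with factors in $\HH_{j-1}'$ lies in $\HH_j'$. First, I would reduce to the case $j=1$: since $\HH_{j-1}'$ is itself a finite-dimensional subspace invariant under complex conjugation and containing $1$ (these properties are preserved by the construction \eqref{3.7}, because $B(1)=ic$ and $B(\bar\zeta)=\overline{B(\zeta)}$ up to the sign convention), it suffices to show that for any finite-dimensional conjugation-invariant subspace $G$ with $1\in G$ one has
$$
\lspan\{B(\zeta):\zeta\in G\}=\lspan\{\zeta_1\cdots\zeta_{2p+1}:\zeta_l\in G\}.
$$
The inclusion $\subset$ is immediate since $B(\zeta)=ic|\zeta|^{2p}\zeta=ic\,\zeta^{p+1}\bar\zeta^{\,p}$ is a product of $2p+1$ elements of $G$ (using $\bar\zeta\in G$).

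For the reverse inclusion, the key tool is a polarisation identity. The map $\zeta\mapsto B(\zeta)=ic\,\zeta^{p+1}\bar\zeta^{\,p}$ is a homogeneous function of bidegree $(p+1,p)$ in $(\zeta,\bar\zeta)$; equivalently, viewing it through the real and imaginary parts of $\zeta$, it is a homogeneous polynomial of degree $2p+1$. The standard multilinear polarisation formula expresses the fully polarised symmetric $(2p+1)$-linear form associated to a degree-$(2p+1)$ homogeneous polynomial $P$ as a finite linear combination
$$
P(\zeta_1,\dots,\zeta_{2p+1})=\frac{1}{(2p+1)!}\sum_{\emptyset\neq S\subset\{1,\dots,2p+1\}}(-1)^{2p+1-|S|}\,\widehat P\Bigl(\sum_{l\in S}\zeta_l\Bigr),
$$
where $\widehat P$ is the homogeneous polynomial. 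Applying this with $\widehat P(\zeta)=B(\zeta)$ and noting that each partial sum $\sum_{l\in S}\zeta_l$ again lies in $G$, we see that the symmetric polarisation of $B$ evaluated at $(\zeta_1,\dots,\zeta_{2p+1})$ lies in $\lspan\{B(\zeta):\zeta\in G\}$. It remains to check that this polarisation, as a symmetric multilinear expression in $\zeta_1,\dots,\zeta_{2p+1}$, together with the conjugation-invariance of $G$ (which lets us freely replace any $\zeta_l$ by $\bar\zeta_l$ and by $1$), spans all monomials of the form $\zeta_1\cdots\zeta_{2p+1}$; in fact one recovers the product of any $p+1$ of the chosen vectors with the conjugates of the remaining $p$, and since $G$ is conjugation-invariant and $1\in G$ these exhaust all products $\zeta_1\cdots\zeta_{2p+1}$ with $\zeta_l\in G$.

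The main obstacle I anticipate is bookkeeping rather than any deep idea: one must be careful that $B$ is \emph{not} complex-polynomial in $\zeta$ (it involves $\bar\zeta$), so the polarisation must be carried out over the reals, treating $\zeta$ as a point of $\R^2$; and one must verify that the mixed-degree structure (degree $p+1$ in $\zeta$, degree $p$ in $\bar\zeta$) does not obstruct recovering an arbitrary product. The clean way to handle this is to polarise separately in the "holomorphic" and "antiholomorphic" slots — that is, to first set $\zeta=\sum_{l=1}^{p+1}s_l a_l+\sum_{m=1}^{p}t_m b_m$ with real parameters $s_l,t_m$ and extract the coefficient of $s_1\cdots s_{p+1}t_1\cdots t_p$, which equals a constant times $a_1\cdots a_{p+1}\,\bar b_1\cdots \bar b_p$ — and then invoke $1\in G$ and $\overline G=G$ to absorb the asymmetry between the two groups of factors. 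Once this identity is in hand, the lemma follows by the reduction to $j=1$ described above, and an easy induction if one prefers to phrase it that way.
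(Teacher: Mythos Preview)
Your overall strategy---polarisation plus conjugation-invariance---is the same as the paper's, but the ``clean way'' you propose in the last paragraph contains an error. When you set $\zeta=\sum_{l=1}^{p+1}s_l a_l+\sum_{m=1}^{p}t_m b_m$ with real parameters $s_l,t_m$, the conjugate $\bar\zeta=\sum_l s_l\bar a_l+\sum_m t_m\bar b_m$ also depends on \emph{all} of the variables. Hence in $B(\zeta)=ic\,\zeta^{p+1}\bar\zeta^{\,p}$ the coefficient of $s_1\cdots s_{p+1}t_1\cdots t_p$ is not the single monomial $a_1\cdots a_{p+1}\bar b_1\cdots\bar b_p$ but the full symmetric sum
\[
ic\,(p+1)!\,p!\sum_{\substack{S\subset\{1,\dots,2p+1\}\\|S|=p+1}}\Bigl(\prod_{k\in S}c_k\Bigr)\Bigl(\prod_{k\notin S}\bar c_k\Bigr),
\qquad (c_1,\dots,c_{2p+1})=(a_1,\dots,a_{p+1},b_1,\dots,b_p),
\]
which is exactly what the ordinary polarisation already gave you. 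So the ``separate'' holomorphic/antiholomorphic polarisation buys nothing.

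The paper sidesteps this difficulty by an observation you come close to but do not exploit: since $\HH_{j-1}'$ is conjugation-invariant, it is spanned over~$\C$ by its \emph{real-valued} elements (namely $\Re\zeta$ and $\Im\zeta$ for $\zeta\in\HH_{j-1}'$). Restricting to real $\zeta_1,\dots,\zeta_{2p+1}\in\HH_{j-1}'$ one has $\bar\zeta_l=\zeta_l$, so
\[
F(x_1,\dots,x_{2p+1})=B\Bigl(\sum_l x_l\zeta_l\Bigr)=ic\Bigl(\sum_l x_l\zeta_l\Bigr)^{2p+1}
\]
is a genuine homogeneous polynomial in the real $x_l$, and the mixed partial $\partial^{2p+1}F/\partial x_1\cdots\partial x_{2p+1}$ at the origin equals $ic\,(2p+1)!\,\zeta_1\cdots\zeta_{2p+1}\in\HH_j'$. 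The case of arbitrary (complex) $\zeta_l\in\HH_{j-1}'$ then follows by $\C$-multilinearity of the product. The hypothesis $1\in\HH_{j-1}'$ is not even needed for this inclusion.

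Your route can also be completed, but with more work: from the symmetric sum above one can isolate a single term $\prod_{k\in S}\zeta_k\prod_{k\notin S}\bar\zeta_k$ by substituting $\zeta_k\mapsto e^{i\theta_k}\zeta_k$ and averaging over suitable $\theta$, then conclude via $\overline{\HH_{j-1}'}=\HH_{j-1}'$. This is strictly longer than restricting to real vectors from the outset.
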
As a consequence of this lemma, we see that   the sequence  $\{\HH_j'\}$ is non-decreasing.~We use the following notion of saturation in the case of the nonlinear CGL equation. 
  \begin{definition}\label{D:3.4}
 The subspace     $\HH$ is  $\OO$-saturating for Eq.~\eqref{3.5}  
 if the subspace $ \HH_\ty' $  restricted to the set~$\OO$     is dense in~$L^2(\OO;\C)$.
\end{definition} 
 The following is the main result of this section.
   \begin{theorem}\label{T:3.5}  
  Assume that $\chi$  is   such that $\OO\neq \emptyset$,   and
	   $\HH $ is an $\OO$-saturating subspace    in the sense of Definition~\ref{D:3.4}.~Then
   Eq.~\eqref{3.5}  is approximately controllable on $\OO$ in small time   by     $\HH$-valued~control.
\end{theorem}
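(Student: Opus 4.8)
The strategy is the classical Agrachev--Sarychev convexification scheme, adapted to the localised setting. I would introduce, for a finite-dimensional subspace $\GG\subset H^{s+2}$, the notion of $(\OO,\GG)$-approximate controllability: Eq.~\eqref{3.5} with controls in $L^2(J_T;\GG)$ can drive $u_0$ to within $\e$ of $u_0+u_1\I_\OO$ in $L^2$. The goal is then an equivalence of the form ``$\HH$ is $\OO$-approximately controlling $\iff$ $\HH_1'$ is $\OO$-approximately controlling,'' from which Theorem~\ref{T:3.5} follows by iteration: since $\HH$ is $\OO$-saturating, $\HH_\infty'$ restricted to $\OO$ is dense in $L^2(\OO;\C)$, and controllability by a dense set of controls upgrades to the desired $\OO$-approximate controllability by an elementary limiting/perturbation argument using the continuity in Proposition~\ref{P:3.1}. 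The induction has two directions, but only the nontrivial one (passing from $\HH_{j}'$-controllability to $\HH_{j+1}'$-controllability, i.e. ``extension'') requires real work; the reverse (``relaxation'') follows from a convexity argument showing that controls valued in $\HH_1'$ can be approximated in a weak sense by rapidly oscillating controls valued in $\HH$ composed with the nonlinearity.

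\textbf{Key steps.} \emph{Step 1 (Extension / large controls).} Fix the target and a control $\eta\in L^2(J_T;\HH_{j+1}')$ nearly achieving it. Using Lemma~\ref{L:3.3}, $\HH_{j+1}'$ is spanned by products $\zeta_1\cdots\zeta_{2p+1}$ with $\zeta_l\in\HH_j'$; such a product is, up to a constant, $B$ evaluated on a linear combination (here one uses that each $\HH_j'$ is invariant under complex conjugation and contains $1$, so a polarisation-type identity expresses the product through finitely many values of $B$ on $\HH_j'$). The plan is to realise the effect of adding $B(\text{large element of }\HH_j')$ to the equation. Concretely, given a control $\zeta$ and a large amplitude $N$, one substitutes $u=v-N\zeta$ into \eqref{3.5}: the leading nonlinear term becomes $B(v-N\zeta)$, whose top-order part in $N$ is $N^{2p+1}B(\zeta)$ plus lower-order-in-$N$ terms, and the linear term contributes $-NL\zeta$, which is absorbed into the control. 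Choosing $N$ and a short time window so that $N^{2p+1}\cdot(\text{duration})$ stays $O(1)$ while $N\cdot(\text{duration})\to 0$, one shows the endpoint of $v$ approximates the endpoint produced by the extra forcing $B(\zeta)$. Summing over the finitely many generators of $\HH_{j+1}'$ and concatenating short time intervals (allowed since we work in small time, $T_0$ arbitrary) yields $\HH_j'$-approximate controllability to the same targets; this is where Proposition~\ref{P:3.1}'s local well-posedness and Lipschitz estimates, plus the openness of $\Theta(u_0,0,T)$, are used to control the error terms.

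\emph{Step 2 (Relaxation / convexification).} Conversely, a control piecewise equal to $B(\zeta)$ with $\zeta\in\HH_j'$ is approximated (in the sense of producing close endpoints) by a control of the form $\chi\cdot$(fast oscillation in $\HH$) via a standard averaging lemma: replacing $\eta$ by $\eta+$ a mean-zero high-frequency $\HH$-valued perturbation changes the solution negligibly in $C(J_T;H^s)$ while, through the nonlinearity, generating on average the term $B(\zeta)$; this is the usual ``relaxation'' step and uses only the structure of $B$ together with Proposition~\ref{P:3.1}. \emph{Step 3 (Saturation closes the argument).} Iterating Steps 1--2 gives $\HH$-approximate controllability to every target reachable by $\HH_\infty'$-valued controls. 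But with $\HH_\infty'|_\OO$ dense in $L^2(\OO;\C)$ one can, for any $u_1\in L^2$, pick $\zeta\in\HH_\infty'$ with $\|\zeta-u_1\|_{L^2(\OO)}<\e/2$ and then simply run the equation on a tiny interval with a constant-in-time control $\eta\equiv c\,\zeta$ for large $c$: for very short $T$ the solution is $u_0+c T\chi\zeta+o(T)$ by Proposition~\ref{P:3.1}, and since $\chi\equiv M$ on $\OO$, choosing $cT=1/M$ places $u(T)$ within $\e$ of $u_0+\zeta\I_\OO$ on $\OO$, while on $\T^d\setminus\OO$ the displacement is $O(T)\to 0$; combine with the density estimate. (In fact this last observation handles the ``to the target'' part directly once the reachable set of controls is dense, so Steps 1--2 only need to be run to enlarge $\HH$ to a set whose span-after-products is dense, which is exactly the saturation hypothesis.)

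\textbf{Main obstacle.} The delicate point is Step 1: one must push the nonlinearity hard (amplitude $N\to\infty$) on short time intervals and still retain control of the solution via the \emph{local} well-posedness of Proposition~\ref{P:3.1}, whose existence time $T_*$ depends on the data size. The balance required is that the $N$-dependence of the duration be tuned so that the extra nonlinear forcing $N^{2p+1}B(\zeta)$ acts with $O(1)$ total impulse while all parasitic terms (the linear $NL\zeta$, cross terms of intermediate order in $N$, and the error from $B$ being genuinely nonlinear rather than its top $N$-homogeneous part) vanish; keeping the solution in $\XX_T$ uniformly along this limit, and handling the fact that $\OO$-localisation means $\chi$ is only $\equiv M$ on $\OO$ (so the ``effective'' control on $\OO$ is $M\eta$ but off $\OO$ it is $\chi\eta$ with $\chi<M$, which is why one only gets control of $u|_\OO$), is the technical heart of the proof. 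The remaining steps are, by comparison, routine given Proposition~\ref{P:3.1} and Lemma~\ref{L:3.3}.
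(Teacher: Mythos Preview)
Your Step~3 contains a genuine gap, precisely at the point where the localisation to~$\OO$ must be produced. You claim that running with control $\eta\equiv c\zeta$ on a short interval and choosing $cT=1/M$ makes the displacement on $\T^d\setminus\OO$ of order $O(T)\to 0$. But $cT=1/M$ is a \emph{fixed} number, so the leading displacement off~$\OO$ is $cT\,\chi\zeta=(\chi/M)\zeta$, which does not tend to zero; you cannot simultaneously fix $cT$ and send $T\to 0$. Moreover, $\zeta\in\HH_\infty'$ is a trigonometric polynomial whose values on $\T^d\setminus\OO$ are rigidly determined (by real-analyticity) by its values on~$\OO$, so one cannot arrange $\zeta|_\OO\approx u_1$ and $(\chi/M)\zeta|_{\T^d\setminus\OO}\approx 0$ independently. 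No short-time, linear-in-control argument can manufacture the indicator~$\I_\OO$.

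The missing idea is that the localisation is produced by the extension step itself, via a power of~$\chi$ that grows at each stage; your Step~1 does not track this. The large shift by $\zeta\in\HH_{j-1}'$ must first be reached from~$u_0$ by a control acting through~$\chi$; what one can actually realise at stage~$j$ is a shift by $\chi^{q^{j-1}}\zeta$. Since $\chi\ge 0$ and $B(\lambda v)=\lambda^{q}B(v)$ for real $\lambda\ge 0$ with $q=2p+1$, one gets $B(\chi^{q^{j-1}}\zeta)=\chi^{q^{j}}B(\zeta)$. The induction therefore yields controllability not to $u_0+\HH_j'$ but to $u_0+\chi^{q^{j}}\HH_j'$: each step up the saturation ladder multiplies the exponent of~$\chi$ by~$q$. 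Normalising $M=1$, the functions $\chi^{q^{N}}$ converge pointwise to the indicator of $\{\chi=1\}$, and dominated convergence converts the reachable target $u_0+\chi^{q^{N}}\eta$ (with $\eta\in\HH_\infty'$ fixed and $\eta|_\OO$ close to $u_1$ by saturation) into $u_0+u_1\I_\OO$ in $L^2$ as $N\to\infty$. This accumulation of powers of~$\chi$ through the nonlinearity---encoded in the paper by Proposition~\ref{P:3.9} applied with shift $\chi^{q^{N-1}}\zeta$---is the technical heart of the localised result, and it is absent from your plan.
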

 By Proposition~\ref{P:4.2},  
the subspace~$\HH(\KK)$  defined by~\eqref{0.4} and~\eqref{0.6} 
 is $\T^3$-saturating in the sense of Definition~\ref{D:3.4}.~Hence, 
 Theorem~B given  in the Introduction is obtained as a particular   case of Theorem~\ref{T:3.5}.

 In the case when  $\chi\equiv1$ on $\T^d$ and under a stronger saturation assumption,   an approximate controllability property of usual form holds for~Eq.~\eqref{3.5}.
 \begin{definition}\label{D:3.6}   
  Eq.~\eqref{3.5} is said to be approximately controllable
by $\HH$-valued control if, for any $\e>0$, any $T>0$, and any $u_0,u_1\in H^s$,   there is  a control $\eta
\in \Theta(u_0,0, T)\cap L^2(J_T;\HH) $ such that
$$
\|\RR_T(u_0,0,h+\eta) - u_1 \|_s<\e.
$$   
\end{definition}  

     \begin{definition}\label{D:3.7}
The subspace   $\HH  $ is saturating for Eq.~\eqref{3.5}  if $\HH_\ty'$  is dense in~$ H^s$.
\end{definition}
    \begin{theorem}\label{T:3.8}  
  Assume that $\chi\equiv1$ on $\T^d$,  and
	   $\HH $ is a saturating subspace in the sense of Definition~\ref{D:3.7}.~Then
   Eq.~\eqref{3.5}  is approximately controllable by $\HH$-valued control in the sense of Definition~\ref{D:3.6}.
\end{theorem}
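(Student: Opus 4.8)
The plan is to run the Agrachev--Sarychev scheme, following the same lines as the proof of Theorem~\ref{T:3.5} but in a simpler form, since the hypothesis $\chi\equiv1$ removes the localisation constraints (now $\OO=\T^d$ and the target is prescribed in $H^s$). The argument rests on three ingredients. The first is a density step: Eq.~\eqref{3.5} is approximately controllable by arbitrary $H^{s-1}$-valued controls. Indeed, using the smoothing of the CGL flow to reach higher Sobolev regularity after an arbitrarily short time and then interpolating smoothly to a point $\e$-close to $u_1$, one produces a trajectory $\bar u\in\XX_T$ with $\bar u(0)=u_0$ and $\bar u(T)$ near $u_1$, and reads off the source term $f=\p_t\bar u+L\bar u+B(\bar u)\in L^2(J_T;H^{s-1})$. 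Since $\HH_\ty'$ is dense in $H^s$, hence in $H^{s-1}$, we approximate $f-h$ in $L^2(J_T;H^{s-1})$ first by a step function in time and then by one with values in $\HH_\ty'=\cup_j\HH_j'$; the continuity and openness properties from Proposition~\ref{P:3.1} then show that for \emph{every} data $(u_0,u_1,T,\e)$ there is a finite $J$ and a control $\eta\in L^2(J_T;\HH_J')$ of accuracy~$\e$.

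The core is the reduction step: given a control $\eta\in L^2(J_T;\HH_j')$ steering $u_0$ to within $\e$ of a target, produce a control in $L^2(J_T;\HH_{j-1}')$ steering $u_0$ to within $2\e$ of it. One works with the extended equation~\eqref{3.3} using two controls $\eta',\zeta$ valued in $\HH_{j-1}'$. On the one hand, if $\zeta\in W^{1,2}(J_T;\HH_{j-1}')$ vanishes at $t=0,T$, the change of unknown $\tilde u=u+\zeta$ turns a solution of~\eqref{3.3} into a solution of~\eqref{3.5} with control $\eta'+\p_t\zeta\in L^2(J_T;\HH_{j-1}')$ and the same endpoints, so it is enough to control~\eqref{3.3} with such a pair $(\eta',\zeta)$. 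On the other hand, one inserts into~\eqref{3.3} spikes of $\zeta$: on an interval of length $\delta$ the control ramps up to a large value $R\vartheta$, $\vartheta\in\HH_{j-1}'$, and back to $0$, keeping $R^{2p+1}\delta$ fixed as $\delta\to0$. The dominant contribution of the nonlinearity is then $B(R\vartheta)=R^{2p+1}B(\vartheta)$, whereas --- and this is where the polynomial, local structure of $B$ is essential --- every other term in the expansion of $B(u+\zeta)$, together with $L\zeta$, $Lu$, $h$ and $\eta'$, integrates to an $o(1)$ contribution over the spike, so that the spike produces in the limit an impulsive increment of $u$ proportional to $B(\vartheta)$. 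Since $\HH_{j-1}'$ is stable under complex conjugation and under multiplication by unit complex numbers, the increments $B(e^{i\theta}\vartheta)=e^{i(2p+1)\theta}B(\vartheta)$ span all complex multiples of $B(\vartheta)$; interleaving many spikes (and the $\eta'$-control) over a fine partition of $J_T$ and passing to the limit by means of Proposition~\ref{P:3.1} and the energy bounds for~\eqref{3.1}, one reproduces the action of an arbitrary $\HH_j'$-valued control, because $\HH_j'=\lspan\{B(\vartheta):\vartheta\in\HH_{j-1}'\}$ by~\eqref{3.7} (equivalently Lemma~\ref{L:3.3}). This is exactly the convexification argument of Agrachev and Sarychev.

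The proof is completed by iteration: given data and $\e>0$, the density step furnishes a finite $J$ and an $\HH_J'$-valued control of accuracy $\e/2$; applying the reduction step $J$ times, with accuracy losses summing to less than $\e/2$, we descend $\HH_J'\to\HH_{J-1}'\to\dots\to\HH_0'=\HH$ and obtain an $\HH$-valued control of accuracy $\e$, which is the assertion of Definition~\ref{D:3.6}. No global well-posedness of~\eqref{3.1} is needed, since at every stage we exhibit explicit approximate trajectories defined on all of $[0,T]$. I expect the reduction step to be the main obstacle: the delicate point is the uniform, boundary-layer type estimate controlling the error terms in $B(u+\zeta)$ as $R\to\infty$ and $\delta\to0$, which relies on $B$ being a finite sum of products and has no known analogue for the Navier--Stokes nonlinearity (Agrachev's 7th problem, \cite{Agrachev-2014}).
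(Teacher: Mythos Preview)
Your proposal is essentially correct, but the route is quite different from the paper's, and considerably longer.

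\textbf{What the paper does.} The paper's proof is almost a corollary of the proof of Theorem~\ref{T:3.5}. With $\chi\equiv1$, the output of Steps~1--2 there (equivalently, estimate~\eqref{3.13}) already says: for any $u_0\in H^s$, any $\eta\in\HH_N'$, any $\e>0$ and $T_0>0$, there exist $T\in(0,T_0)$ and an $\HH$-valued control steering $u_0$ to within $\e$ of $u_0+\eta$. Since $\HH_\infty'$ is dense in $H^s$ (Definition~\ref{D:3.7}), this gives small-time approximate controllability in $H^s$ to arbitrary targets. The only additional work is to pass from ``some small time'' to a prescribed time $T$: one observes (Proposition~\ref{P:3.1}) that the unforced flow stays in an $\e$-neighbourhood of the target $u_1$ for a fixed short time $\tau$ provided the initial data lies in a small ball around $u_1$; one then alternates free evolution on intervals of length~$\tau$ with short controlled ``returns'' to that ball, filling out $[0,T]$ in finitely many steps.

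\textbf{Comparison.} Your scheme works entirely at the fixed time~$T$: first construct an ad hoc trajectory to get an $H^{s-1}$-valued control, approximate it by an $\HH_J'$-valued one, then descend level by level via convexification with $\zeta$-spikes. This is the classical Agrachev--Sarychev organisation and is sound, but it reproves at fixed time what Proposition~\ref{P:3.9} and the proof of Theorem~\ref{T:3.5} already deliver in small time. In particular, your ``interleaving many spikes over a fine partition of $J_T$'' needs uniform-in-$\delta$ estimates across many spike locations, which is genuinely more work than the single asymptotic limit in Proposition~\ref{P:3.9}; the paper avoids this entirely by decoupling the small-time reachability (done once, via Theorem~\ref{T:3.5}) from the time-$T$ bookkeeping (the elementary wait-and-return loop). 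What your approach buys is self-containment: it does not presuppose the small-time result.

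One minor slip: since $B(u)=ic|u|^{2p}u$, one has $B(e^{i\theta}\vartheta)=e^{i\theta}B(\vartheta)$, not $e^{i(2p+1)\theta}B(\vartheta)$; your conclusion that the increments span all complex multiples of $B(\vartheta)$ is unaffected.
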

  A stronger  version of this theorem holds when the subspace~$\HH$ is of a special form.~More precisely, let~$\II\subset\Z^d$ be a finite set containing the zero vector, and let the subspace~$\HH(\II)$ and the set~$\tilde \II\subset \Z^d$ be defined as in the end of Section~\ref{S:1.2}. Furthermore, let $H^s(\II)$ be the closure in $H^s$ of the~subspace~$  \HH(\tilde \II)$.

 \begin{theorem}\label{T:3.9}
Assume that $h\in L^2  (J_T;H^{s-1}(\II))$.~Then Eq.~\eqref{3.5} is approximately controllable by $\HH(\II)$-valued control in the sense of Definition~\ref{D:3.6} if and only if  $\II$ is a generator.
\end{theorem}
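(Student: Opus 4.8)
The plan is to derive this statement from Theorem~\ref{T:3.8} by verifying that, for the special subspaces $\HH(\II)$, the saturation condition of Definition~\ref{D:3.7} is equivalent to $\II$ being a generator, and then to prove separately that when $\II$ is \emph{not} a generator the controllability fails. For the ``if'' direction, suppose $\II$ is a generator, i.e.\ $\tilde\II=\Z^d$. Since $1\in\HH(\II)$ and $\HH(\II)$ is invariant under complex conjugation, the recursion \eqref{3.7} for $\HH_j'$ produces, by Lemma~\ref{L:3.3}, all $(2p+1)$-fold products of functions $\cos\lag l,x\rag$, $\sin\lag l,x\rag$ with $l\in\II$. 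Expanding such products via the product-to-sum trigonometric identities shows that $\HH_j'$ contains $\cos\lag m,x\rag$ and $\sin\lag m,x\rag$ for every $m$ that is an integer combination of at most (a fixed multiple of) $j$ vectors of $\II$; letting $j\to\infty$, $\HH_\ty'$ contains all trigonometric monomials indexed by $\tilde\II=\Z^d$. Hence $\HH_\ty'\supset\HH(\Z^d)$, which is dense in $H^s$, so $\HH$ is saturating in the sense of Definition~\ref{D:3.7} and Theorem~\ref{T:3.8} (with $\chi\equiv1$) gives approximate controllability by $\HH(\II)$-valued control. This half is essentially the content of (one direction of) the saturation computation that will be carried out in Section~\ref{S:4}; I would cite Proposition~\ref{P:4.2} (or its analogue for this recursion) to avoid repeating it.

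For the ``only if'' direction, assume $\II$ is not a generator, so $\tilde\II$ is a proper subgroup of $\Z^d$. The key structural observation is that the closed subspace $H^s(\II)$ — the closure in $H^s$ of $\HH(\tilde\II)=\lspan\{\cos\lag m,x\rag,\sin\lag m,x\rag:m\in\tilde\II\}$ — is invariant under the flow of Eq.~\eqref{3.5} whenever the data lie in it. Indeed: the linear operator $L=-(\nu+i)\Delta+\gamma$ is Fourier-diagonal and preserves $H^s(\II)$; the nonlinearity $B(u)=ic|u|^{2p}u$, when applied to a function whose Fourier support lies in $\tilde\II$, produces a function whose Fourier support lies in the sumset of $2p+1$ copies of $\tilde\II\cup(-\tilde\II)$, which is again contained in $\tilde\II$ because $\tilde\II$ is a subgroup; by hypothesis $h\in L^2(J_T;H^{s-1}(\II))$; and any $\HH(\II)$-valued control $\eta$ satisfies $\HH(\II)\subset\HH(\tilde\II)\subset H^{s+2}(\II)$. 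Therefore, if $u_0\in H^s(\II)$, the local solution from Proposition~\ref{P:3.1} stays in $H^s(\II)$ for as long as it exists. Consequently $\RR_T(u_0,0,h+\eta)$ always lies in $H^s(\II)$, and no target $u_1\notin H^s(\II)$ can be approximated to within $\e<\dist_{H^s}(u_1,H^s(\II))$. Since $\tilde\II\subsetneq\Z^d$ implies $H^s(\II)\subsetneq H^s$, such $u_1$ exists, and approximate controllability in the sense of Definition~\ref{D:3.6} fails.

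To make the invariance argument rigorous one should work with a Galerkin/approximation scheme or directly with the mild formulation: writing $u$ as the fixed point of $u\mapsto e^{-tL}u_0+\int_0^t e^{-(t-\tau)L}\big(h(\tau)+\eta(\tau)-B(u(\tau))\big)\,\dd\tau$ on $\XX_T$, one checks that the closed subspace $\XX_T\cap L^2(J_T;H^{s+1}(\II))\cap C(J_T;H^s(\II))$ is mapped into itself by this contraction (using that $e^{-tL}$, convolution with it, and $B(\cdot)$ all preserve Fourier support in $\tilde\II$ as above), so the unique fixed point lies in it. I expect the \textbf{main obstacle} to be the ``if'' direction's trigonometric bookkeeping — showing that iterated $(2p+1)$-fold products starting from $\HH(\II)$ recover every frequency in $\tilde\II$ (not merely in the additive semigroup generated by $\II\cup(-\II)$) — but the presence of the constant function $1$ in $\HH$ and of both $\pm l$ via complex conjugation is exactly what supplies the subtractions needed to reach the full subgroup, and this is presumably handled once and for all in Section~\ref{S:4}; everything on the ``only if'' side is a soft invariance argument with no real analytic difficulty.
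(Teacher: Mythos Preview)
Your proposal is correct and follows essentially the same approach as the paper: the ``if'' direction invokes Proposition~\ref{P:4.2} and Theorem~\ref{T:3.8}, while the ``only if'' direction uses the invariance of $H^s(\II)$ under $L$, $B$, and the forcing to conclude that trajectories starting in $H^s(\II)$ (the paper simply takes $u_0=0$) remain there, so targets with a nonzero Fourier mode in $\Z^d\setminus\tilde\II$ cannot be approximated. Your extra care with the mild-formulation fixed-point argument to justify the invariance is more detailed than the paper's one-line assertion, but the underlying idea is identical.
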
 
Theorems~\ref{T:3.5}, \ref{T:3.8}, and \ref{T:3.9} are proved in the next subsection.~The approximate controllability of nonlinear heat equations has been studied in the papers~\cite{FPZ-95, FCZ-00} when 
 the     control  is localised in the physical space (but not in   Fourier) and the nonlinear term     grows slowly.~The proof of the above three theorems is inspired by the approach of Agrachev and Sarychev introduced in the  papers~\cite{AS-2005, AS-2006} to consider the     approximate controllability of  the 2D   NS  and Euler systems.~That approach has been further extended and developed   by many authors to   various~PDEs. See the papers~\cite{shirikyan-cmp2006, shirikyan-aihp2007, Ners-2015, VN-2021} for the study of the case of the~3D NS     system,~\cite{SSRodrig-06, RD-2018} for the case of the NS system  on    rectangles with   Lions boundary conditions,~\cite{Hayk-2010,   Hayk-2011} for the 3D Euler system,~\cite{Sar-2012}  for the  2D   cubic Schr\"odinger equation, and~\cite{BGN-2020} for the 3D system of primitive equations of meteorology and oceanology.~The arguments we use in the current setting are closer to the
  ones of the paper~\cite{VN-2019A},  where   parabolic equation  is~considered with a   polynomially growing nonlinearity.

 In all the  above papers, equations with additive controls   are considered.  Let us also mention    the recent paper~\cite{DN-2021}, where   a version of Agrachev--Sarychev technics is proposed to  study the controllability of the  nonlinear Schr\"odinger equation with a multiplicative control.

  \subsection{Proof  of the theorems}\label{S:3.2}

Let $\hat{\Theta}(u_0,T)$ be the   set of pairs 
$$
(\eta, \zeta)\in L^2(J_T;H^{s-1})\times C(J_T;H^{s+1})
$$ such that the problem~\eqref{3.3},~\eqref{3.2} with $f=h+\chi\eta$ has a unique
solution in~$\XX_T$. The following proposition plays an  important role in our arguments. 
  \begin{proposition}\label{P:3.9}  For any  $u_0, \eta\in H^{s+1}$,  $\zeta \in  H^{s+2}$, and   $h\in L^2  (J;H^{s-1})$, there is  $\de_0>0$ such that $(\de^{-1}\eta,\de^{-1/q}\zeta)\in \hat{\Theta}(u_0,\de)$ for any $\de\in (0,\de_0)$, and     the following  limit  holds: 
$$
 	 \RR_{\de}(u_0,\de^{-1/q}\zeta,h+\de^{-1}\chi\eta)\to  u_0+\chi\eta-B(\zeta)  \quad\text{in $H^s$ as $\de\to 0^+$},
$$ where $q=2p+1$.
\end{proposition}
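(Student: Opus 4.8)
The plan is to exploit the singular rescaling built into the exponents $\delta^{-1}$ and $\delta^{-1/q}$: as $\delta\to0^+$ the control amplitude blows up but acts only over a time window of length $\delta$, so the net effect is a finite ``kick''. Concretely, I would first recenter the problem. Write the candidate solution as $u = u_0 + \chi\eta\,t/\delta + r$ — no, better: since we want the terminal value $u_0+\chi\eta-B(\zeta)$, I would look for the solution of \eqref{3.3} with $\zeta$ replaced by the constant (in time) profile $\delta^{-1/q}\zeta$ and source $h+\delta^{-1}\chi\eta$ in the form $u(t) = u_0 + t\delta^{-1}\chi\eta + y(t)$, and derive the equation satisfied by the remainder $y$. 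The point of the $\delta^{-1}$ scaling on the additive term is exactly that $\partial_t(t\delta^{-1}\chi\eta) = \delta^{-1}\chi\eta$ cancels the singular part of the source, leaving $y$ to satisfy an equation whose only remaining singular term comes from $B(u+\delta^{-1/q}\zeta)$.

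The second, crucial step is to analyze that nonlinear term. With $q=2p+1$ the top-order term in $B(w) = ic|w|^{2p}w$ scales like the $q$-th power of its argument; substituting $w = u + \delta^{-1/q}\zeta$ and expanding, the dominant contribution as $\delta\to0$ is $ic\,\delta^{-1}|\zeta|^{2p}\zeta = \delta^{-1}B(\zeta)$, while all cross terms carry strictly fewer powers of $\delta^{-1/q}$ and are therefore of lower order. So I would again absorb this leading singular piece by setting $u(t) = u_0 + t\delta^{-1}\chi\eta - t\delta^{-1}B(\zeta) + y(t)$; then $y$ solves a problem of the form $\dot y + L(u+\zeta_\delta) + [\text{lower-order terms}] = 0$, $y(0)=0$, on $[0,\delta]$, where every remaining inhomogeneity is bounded in $H^{s-1}$ by $C\delta^{-\alpha}$ for some $\alpha<1$ (one keeps careful track: the worst term is $L(\delta^{-1/q}\zeta)=\delta^{-1/q}L\zeta$, bounded in $H^{s-1}$ since $\zeta\in H^{s+2}$, and the sub-leading pieces of $B$ scale like $\delta^{-(q-1)/q}=\delta^{-2p/(2p+1)}$, still a negative power of $\delta$ strictly greater than $-1$). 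Standard parabolic energy estimates for \eqref{3.3} — precisely the local well-posedness machinery of Proposition~\ref{P:3.1}, applied on the short interval $J_\delta$ — then give $\|y\|_{\XX_\delta}\le C\delta\cdot\delta^{-\alpha} = C\delta^{1-\alpha}\to0$, which in particular shows $(\delta^{-1}\eta,\delta^{-1/q}\zeta)\in\hat\Theta(u_0,\delta)$ for small $\delta$ and that $y(\delta)\to0$ in $H^s$. Evaluating the ansatz at $t=\delta$ gives $\RR_\delta(u_0,\delta^{-1/q}\zeta,h+\delta^{-1}\chi\eta) = u_0 + \chi\eta - B(\zeta) + y(\delta) \to u_0+\chi\eta-B(\zeta)$ in $H^s$, as claimed. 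The bounded source $h$ contributes only an $O(\delta)$ term and is harmless; the regularity hypotheses $u_0,\eta\in H^{s+1}$, $\zeta\in H^{s+2}$ are used to ensure $t\delta^{-1}\chi\eta$ and $t\delta^{-1}B(\zeta)$ stay in $\XX_\delta$ with the right norms (note $B$ maps $H^{s+2}$ to $H^{s+1}$, and $\chi\eta\in H^{s+1}$).

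The main obstacle I anticipate is making the expansion of $B(u+\delta^{-1/q}\zeta)$ fully rigorous with uniform constants: one must expand $|u+\delta^{-1/q}\zeta|^{2p}(u+\delta^{-1/q}\zeta)$ into $q+1$ (really, a bounded number of) monomials in $u$, $\zeta$, $\bar u$, $\bar\zeta$, separate the unique term that is purely of degree $q$ in the $\zeta$-variables (which produces $\delta^{-1}B(\zeta)$), and bound the $H^{s-1}$-norm of every other monomial by a fixed power $\delta^{-\alpha}$ with $\alpha<1$, uniformly in $t\in[0,\delta]$ along the trajectory — which is circular unless one sets up the fixed-point/a~priori-estimate argument so that $\|u(t)\|_{H^s}$ is controlled a~priori by a $\delta$-independent constant (plus the explicit singular pieces). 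This is handled by the same bootstrap as in Proposition~1 of~\cite{VN-2019A}: one proves the estimate for $y$ in a ball of $\XX_\delta$ whose radius is $\delta$-independent, using that the singular terms enter only through their (controlled) $L^2(J_\delta;H^{s-1})$ norms, which are $O(\delta^{1-\alpha})$ after integrating over the short interval. Everything else — continuity of $\RR$, openness of $\hat\Theta$ — is quoted directly from Proposition~\ref{P:3.1}.
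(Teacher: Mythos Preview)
Your overall strategy---subtract the explicit singular profiles $t\delta^{-1}\chi\eta$ and $-t\delta^{-1}B(\zeta)$ from the solution, then close a fixed-point argument for the remainder $y$ on $J_\delta$---is exactly the scheme of Proposition~2 in~\cite{VN-2019A}, to which the paper simply defers. So the architecture is correct and matches the intended proof.

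There is, however, a quantitative slip that prevents your argument from closing as written. You claim that the singular forcing terms, bounded pointwise by $C\delta^{-\alpha}$ with worst exponent $\alpha=(q-1)/q$, have $L^2(J_\delta;H^{s-1})$ norm of order $\delta^{1-\alpha}$; but $L^2$ integration over an interval of length~$\delta$ gains only a factor $\delta^{1/2}$, so that norm is in fact $O(\delta^{1/2-\alpha})$. Since $(q-1)/q>\tfrac12$ for every $q\ge3$, this diverges, and the $L^2(J_\delta;H^{s-1})$-based Lipschitz estimate of Proposition~\ref{P:3.1} cannot by itself produce the factor $\delta$ you write down. The repair is to observe that the singular pieces actually lie in $H^s$, not merely in $H^{s-1}$: the sub-leading monomials of $B(a+y+\delta^{-1/q}\zeta)-\delta^{-1}B(\zeta)$ belong to $H^s$ by the algebra property (here $s>d/2$), and $\delta^{-1/q}L\zeta\in H^s$ precisely because $\zeta\in H^{s+2}$---this is what that extra regularity hypothesis is for. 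One then bounds their contribution to $y$ through the Duhamel formula together with the contractivity $\|e^{-tL}\|_{H^s\to H^s}\le1$, which yields an $L^1(J_\delta;H^s)$-type estimate carrying the honest factor~$\delta$ and hence $\|y\|_{C(J_\delta;H^s)}=O(\delta^{1/q})$. The non-singular terms $h$ and $La$, which are only in $H^{s-1}$, are treated separately by the $L^2(J_\delta;H^{s-1})$ estimate, whose contribution is $O(\delta^{1/2})$. With this splitting the bootstrap closes on a $\delta$-independent ball of~$\XX_\delta$ and the conclusion follows.
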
This is proved in the same way as Proposition~2 in~\cite{VN-2019A}; we shall not dwell on the details.

  \begin{proof}[Proof of Theorem~\ref{T:3.5}]

To begin with, let us assume that~$u_0\in H^{s+1}$.
  
 {\it Step~1.~Controllability   to  $u_0+ \chi\HH_0$.}~Let us first note that  the problem~\eqref{3.5}, \eqref{3.2} is approximately controllable in small time to any target in the set~$u_0+\chi \HH_0$, i.e.,     
for any $\e>0$, $\eta\in\HH_0$, and $T_0>0$, there are~$T\in (0,T_0)$  and~$\hat \eta
\in \Theta(u_0,T)\cap L^2(J_T;\HH)$ such that  
$$
 \| \RR_T (u_0, 0 , h + \chi \hat \eta) - u_0-\chi \eta \|_s< \e.
$$
Indeed, this follows from   Proposition~\ref{P:3.9} applied for the pair  $(\eta, \zeta)= (\chi\eta,0)$: 
$$ 	
 \RR_{\de}(u_0,0,h+\de^{-1}\chi \eta)\to  u_0+\chi\eta \quad\text{in $H^s$ as $\de\to 0^+$}.
$$
This  implies    the required property with  $T=\delta$ and $\hat\eta=\delta^{-1}\eta$.

{\it Step~2.~Controllability  to  $u_0+\chi^{q^N} \HH_N$.}~Arguing by induction on $N\ge0$, let~us show that the problem~\eqref{3.5}, \eqref{3.2}    is approximately controllable in small time to any target  in~$u_0+\chi^{q^N} \HH_N$.~The base case $N=0$ is considered in   step~1.~Assume that the property is proved for~$N-1$, and let~$\eta\in \HH_N$.~Then, there are vectors~$\zeta_1, \ldots, \zeta_n\in \HH_{N-1}$ such that 
\begin{equation}\label{3.10}
\eta=B(\zeta_1)+\ldots+B(\zeta_n).
\end{equation} Applying Proposition~\ref{P:3.9} for the pair~$(\eta, \zeta)= (0,\chi^{q^{N-1}}\zeta_1)$, we obtain 
\begin{equation}\label{3.11}
 \RR_{\de}(u_0,\de^{-1/q}\chi^{q^{N-1}}\zeta_1,h)\to  u_0-\chi^{q^N}B(\zeta_1)  \quad\text{in $H^s$ as $\de\to 0^+$}.
\end{equation}
 On the other hand,  the following  equality holds
$$
\RR_{t}(u_0+\delta^{-1/q}\chi^{q^{N-1}}\zeta_1,0,h)=\RR_{t}(u_0,\delta^{-1/q}\chi^{q^{N-1}}\zeta_1,h)+\delta^{-1/q}\chi^{q^{N-1}}\zeta_1, \quad t\in J_\delta
$$by the uniqueness of the solution of the Cauchy problem.~Taking  in this equality $t=\delta$ and  using   \eqref{3.11}, we get  
$$
\|\RR_{\delta}(u_0+\delta^{-1/q}\chi^{q^{N-1}}\zeta_1,0,h)-u_0+\chi^{q^{N}}B(\zeta_1) -\delta^{-1/q}\chi^{q^{N-1}}\zeta_1\|_s\to 0\quad \text{as $\delta\to 0^+.$}	
$$
This limit,   the assumption that  $\zeta_1\in \HH_{N-1}$,   the   induction   hypothesis, and Proposition~\ref{P:3.1} imply  that there is a small time $T>0$ and  a   control $  \eta_1
\in \Theta(u_0,0,T)\cap L^2(J_T;\HH)$ such that 
$$ 
	\|\RR_T(u_0,0,h+\chi\eta_1) - u_0+\chi^{q^N}B(\zeta_1) \|_s<\e.
$$
Iterating this argument  for   $\zeta_2, \ldots, \zeta_n$, we construct a small time 
$\hat T >0$ and  a   control $\hat \eta
\in \Theta(u_0,0, \hat T)\cap L^2(J_{\hat T},\HH)$ such that (cf.~\eqref{3.10})
\begin{gather*}
	\|\RR_{\hat T}(u_0,0,h+\chi\hat\eta) -  u_0 +\chi^{q^N}(B(\zeta_1) +\ldots+B(\zeta_n)) \|_s
	\\=\|\RR_{\hat T}(u_0,0,h+\chi\hat\eta) -  u_0+\chi^{q^N}\eta  \|_s<\e.
\end{gather*}
Thus, we have  approximate controllability in small time to any target in  the set~$u_0+\chi^{q^N} \HH_N$.

{\it Step~3.~Conclusion.}~Without loss of generality, we can assume that the maximum~$M$ of the function~$\chi$ (see~\eqref{3.6}) equals to~1. 
 Let  us take any $u_1\in L^2$. By the saturation hypothesis (Definition~\ref{D:3.4}), 
there is an integer $N\ge1$ and a vector~$  \eta\in  \HH_N$ such~that 
\begin{equation}\label{3.12}
\| u_0+\eta-  \hat u_1\|_{L^2(\OO;\C)}<\e/2.
\end{equation}On the other hand, by the fact that the sequence $\{\HH_j\}$ is non-decreasing  and
  the results of steps~1 and~2, for any $\e>0$ and $T_0>0$, there are   sequences of integers~$\{N_n\}\subset \N $, times  
   $\{T_n\}\subset  (0,T_0)$,  and     controls $  \{\eta_n\} \subset 
  \Theta(u_0,0,T_n)\cap L^2(J_{T_n},\HH)$ such that~$N_n\to +\ty $~and 
\begin{equation}\label{3.13}
\|\RR_{T_n}(u_0,0,h+\chi \eta_n) - u_0-\chi^{q^{N_n}}\eta \|_s<\e/2, \quad n\ge1.
\end{equation}
  Combining this with \eqref{3.12}, the fact that $\chi(x)\in [0,1)$ for  $x\in   \T^2\setminus \OO$, and the Lebesgue theorem on dominated
convergence, we derive     approximate controllability   in small time   from the  initial position~$u_0\in H^{s+1}$ to the target~$u_0+\I_{\OO} u_1$ in the~$L^2$-norm.~Taking $\eta$ equal to  zero on a small interval of time and using the   regularising property of the CGL equation, we   obtain   approximate controllability in small time    from    arbitrary~$u_0\in H^s$.  
 \end{proof}

  \begin{proof}[Proof of Theorem~\ref{T:3.8}] The starting point of the proof is~\eqref{3.13}, where we   take  $\chi\equiv1$.~The saturation assumption (Definition~\ref{D:3.7}) implies that~we have 
     approximate controllability in small time in the sense that, for any $\e>0$,~$T_0>0$,   
    and~$u_0,u_1\in H^s$,   there are    $T\in (0,T_0)$ and $\eta
\in \Theta(u_0,0,T)\cap L^2(J_T;\HH) $ such that
$$
\|\RR_T(u_0,0,h+\eta) - u_1 \|_s<\e.
$$  Thus,  the theorem will be proved if we show that,  for any $\e>0$, $T>0$   and~$u_1\in H^s$,    there is  $\eta
\in \Theta(u_1,0,T)\cap L^2(J_T;\HH)  $ verifying 
$$
\|\RR_T(u_1,0,h+\eta) - u_1 \|_s<\e
$$ (with initial condition coinciding with the target~$u_1$).~By Proposition~\ref{P:3.1}, there are constants~$r\in (0,\e)$ and~$\tau>0$   such~that    $(0,0)\in  \hat\Theta(v,\tau)$   and
$$
	\|\RR_t(v,0,h)-u_1\|_s<\e \quad  \text{for any $v \in B_{H^s}(u_1,r), \,\,  t\in J_\tau$}  .
$$   
 If $\tau\ge T$, then   the proof of the theorem is complete.~If~$\tau< T$, we use the approximate controllability property    with  initial condition~$u_0=\RR_\tau(v,h)$, small time $T'< T-\tau$, and target $u_1$. Thus, we find   $\hat \eta\in \Theta(u_0,0,T')\cap L^2(J_{T'},\HH)$ such~that 
$$\RR_{T'}(u_0,0,h+\hat\eta) \in B_{H^s}(u_1,r).
$$   Again,  if $2\tau+T'>T$, then   the proof is complete.  If $2\tau+T'<T$, we apply    the controllability property to return to~$B_{H^s}(u_1,r)$. Iterating finitely many times this argument, we complete the proof.
  \end{proof}

  \begin{proof}[Proof of Theorem~\ref{T:3.9}] 
    If $\II$ is a generator, then $\HH(\II)$ is saturating by   Proposition~\ref{P:4.2}.~Applying Theorem~\ref{T:3.8}, we derive the required  approximate controllability property. 
    
    If $\II$   is not a generator, let $l$ be any vector in the non-empty set      $\Z^d\setminus \tilde \II$. The~assumption that~$h\in L^2  (J_T;H^{s-1}(\II))$, the fact that~$H^s(\II)$ is invariant for the linear CGL equation (i.e., Eq.~\eqref{3.5} with $B=0$), and   that the   term $B$   maps~$H^s(\II)$ to itself imply that       
the~set     $$
   \aA= \{\RR_T(0,0,h+\eta): \eta\in  \Theta(u_0,0,T)\cap L^2(J_T;\HH) \}  
    $$ is contained in  $H^s(\II)$.~Thus,    $\cos\lag l,x\rag$ and $\sin\lag l,x\rag$ are orthogonal to $\aA$, so the set attainable from the origin with $\HH$-valued controls is not dense in~$H^s$.   
    \end{proof}

  \section{Examples of saturating subspaces}\label{S:4}
  
  \subsection{Linearised equation}\label{S:4.1}

  Let $s\ge 0$ be an arbitrary number,~$\II\subset\Z^d$ be a finite  set containing the zero vector, and~$\HH(\II)$ be defined  by~\eqref{1.10}.~We denote by~$\HH_k(\II)$,~$k\in \N\cup\{\ty\}$   the subspaces~$\HH_k$ given by relations~\eqref{1.6} and~\eqref{1.7} for~$\HH=\HH(\II)$.  
   In this section, we prove the following result. 
      \begin{proposition}\label{P:4.1} The subspace $\HH_\ty(\II)$ is danse in $H^s$
        if and only if~$\II$ is a generator.
\end{proposition}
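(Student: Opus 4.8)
The plan is to identify $\HH_\ty(\II)$ with the (unital) subalgebra of $C^\infty(\T^d)$ generated by $\HH(\II)$ and then to read off its Fourier spectrum. First I would record the elementary fact that $\HH_\ty(\II)$ is exactly the span of all finite products $\varphi_1\cdots\varphi_m$ with $\varphi_1,\dots,\varphi_m\in\HH(\II)$. This follows by a straightforward induction from~\eqref{1.6}: using that $1\in\HH(\II)$ (because $0\in\II$) one checks that $\HH_j(\II)$ is precisely the $\C$-linear span of products of at most $j+1$ factors from $\HH(\II)$, and passing to the union over $j$ gives the claim.

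Next I would pass to the exponential description. Since $e^{\pm i\lag l,x\rag}=\cos\lag l,x\rag\pm i\sin\lag l,x\rag$, every element of $\HH(\II)$ is a trigonometric polynomial whose Fourier spectrum is contained in $\II\cup(-\II)$; as the spectrum of a product of trigonometric polynomials lies in the sumset of the spectra, the first step shows that every element of $\HH_\ty(\II)$ is a trigonometric polynomial with spectrum contained in the set of all finite sums $\pm l_1\pm\dots\pm l_m$, $l_j\in\II$. Because $0\in\II$, that set is exactly the subgroup $\tilde\II\subset\Z^d$ generated by $\II$, so $\HH_\ty(\II)\subset\overline{\lspan}\{e^{i\lag n,x\rag}:n\in\tilde\II\}$ with the closure taken in $H^s$. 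This already settles the ``only if'' direction: if $\II$ is not a generator, choose $n_0\in\Z^d\setminus\tilde\II$; then the nonzero function $e^{i\lag n_0,x\rag}$ is $H^s$-orthogonal to every $e^{i\lag n,x\rag}$ with $n\in\tilde\II$, hence to all of $\HH_\ty(\II)$, and so $\HH_\ty(\II)$ cannot be dense in $H^s$.

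For the ``if'' direction I would assume $\II$ is a generator and fix an arbitrary $n\in\Z^d=\tilde\II$, writing $n=\sum_j a_j l_j$ with $l_j\in\II$ and $a_j\in\Z$. Then
$$
\prod_{j:\,a_j>0}\bigl(e^{i\lag l_j,x\rag}\bigr)^{a_j}\;\prod_{j:\,a_j<0}\bigl(e^{-i\lag l_j,x\rag}\bigr)^{|a_j|}=e^{i\lag n,x\rag},
$$
and the left-hand side is a finite product of elements $e^{\pm i\lag l_j,x\rag}\in\HH(\II)$, so $e^{i\lag n,x\rag}\in\HH_\ty(\II)$. Hence $\HH_\ty(\II)$ contains every exponential $e^{i\lag n,x\rag}$, $n\in\Z^d$, and therefore all trigonometric polynomials, which are dense in $H^s(\T^d;\C)$. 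The argument is essentially bookkeeping; the only point requiring a little care is the first step — checking that $\HH_\ty(\II)$ really is the full generated subalgebra and that its spectrum is exactly $\tilde\II$, not a smaller set — together with the (notational) observation that $\HH(\II)$ must be read as a complex-linear span for the identity $e^{\pm i\lag l,x\rag}\in\HH(\II)$ to make sense. I do not expect a genuine obstacle.
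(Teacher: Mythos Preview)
Your proof is correct and follows essentially the same idea as the paper's: both arguments identify the Fourier spectrum of $\HH_\infty(\II)$ with the subgroup $\tilde\II$. The paper carries this out with the real product-to-sum identities, showing inductively that if $\cos\lag m,x\rag,\sin\lag m,x\rag\in\HH(\II)$ and $\cos\lag l,x\rag,\sin\lag l,x\rag\in\HH_k(\II)$ then $\cos\lag m\pm l,x\rag,\sin\lag m\pm l,x\rag\in\HH_{k+1}(\II)$, whereas you pass to the exponential basis and use that products of exponentials add frequencies. Your version is slightly more structural (first recognising $\HH_\infty(\II)$ as the unital subalgebra generated by $\HH(\II)$) but, as you note, it requires reading $\lspan$ as $\C$-linear so that $e^{\pm i\lag l,x\rag}\in\HH(\II)$; the paper's computation with real sines and cosines avoids that point. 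Otherwise the two arguments are interchangeable.
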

\begin{proof}    Assume that~$\II$ is a generator. From the identities
\begin{align*}
2\cos\lag l,x\rag \cos\lag m,x\rag&= \cos\lag l-m,x\rag+\cos\lag l+m,x\rag , \\ 
2\sin\lag l,x\rag \sin\lag m,x\rag&= \cos\lag l-m,x\rag-\cos\lag l+m,x\rag ,\\	
2\sin\lag l,x\rag \cos\lag m,x\rag&= \sin\lag l-m,x\rag+\sin\lag l+m,x\rag , \\ 
2\cos\lag l,x\rag \sin\lag m,x\rag&=   \sin\lag m-l,x\rag+\sin\lag l+m,x\rag 
\end{align*} it follows that  if $l,m\in \Z_*^d$ are   such that 
$$
 \cos\lag m,x\rag,\, \sin\lag m,x\rag\in\HH(\II)\,\,\text{ and} \,\, \cos\lag l,x\rag,\, \sin\lag l,x\rag\in  \HH_k(\II) 
$$for some $k\ge0$,
  then  
$$
  \cos\lag m\pm l,x\rag,\, \sin\lag m\pm l,x\rag \in \HH_{k+1}(\II).
$$
This implies that
$$
 \cos\lag l,x\rag,\, \sin\lag   l,x\rag \in \HH_\ty(\II) \quad \text{for any $l\in \Z^d$},
$$
so the space $\HH_\ty(\II)$ is danse in $H^s$ for any $s\ge0$.

If $\II$ is not a generator, we argue as in the proof of Theorem~\ref{T:3.9}. Let~$l$ be any vector in~$  \Z^d_*\setminus \tilde \II$. From the above trigonometric identities it follows that~$\HH_\ty (\II)\subset \HH(\tilde \II)$.~Thus,  the functions     $\cos\lag l,x\rag$ and $\sin\lag l,x\rag$    are orthogonal to the subspace~$\HH_\ty(\II)$ in~$H^s$.~Hence,      $\HH_\ty(\II)$ is 
not dense in~$H^s$. \end{proof}

  \subsection{Nonlinear equation}\label{S:4.2}
  
  We start this section with a proof of   Lemma~\ref{L:3.3}.
  \begin{proof}[Proof of Lemma~\ref{L:3.3}]
 Let us denote by~$\GG$ the subspace on the right-hand side of~\eqref{3.9}.~Clearly, we have $\HH_j'\subset \GG$. To prove the inverse inclusion, 
 we take any vectors~$\zeta_l\in \HH_{j-1}'$, $l=1, \ldots, 2p+1$ and 
   consider the function $F:\R^{2p+1}\to \HH_j'$ defined~by
 $$
 F(x_1, \ldots, x_{2p+1})=B\left(\sum_{l=1}^{2p+1} x_l \zeta_l\right).
 $$
 As the subspace $\HH_j'$ is invariant under complex conjugation, we have $\Re \zeta \in \HH_j'$ for any $\zeta\in \HH_j'$.~Let us choose $\zeta_l\in \HH_{j-1}'$, $l=1, \ldots, 2p+1$ to be real vectors. Then
  $$
 F(x_1, \ldots, x_{2p+1})= \left(\sum_{l=1}^{2p+1} x_l \zeta_l\right)^{2p+1}.
 $$
  As the subspace $\HH_j'$ is closed, we have  
$$
\frac{\p^{2p+1}}{\p_{x_1}\ldots \p_{x_{2p+1}}} F(0,\ldots,0)=(2p+1)!\,\zeta_1\cdot\ldots\cdot \zeta_{2p+1}\in \HH_j'.
$$  By linearity, this implies that 
  $\zeta_1\cdot\ldots\cdot \zeta_{2p+1}\in \HH_j'$ for any vectors $\zeta_l\in \HH_{j-1}'$, $l=1, \ldots, 2p+1$.
 \end{proof}
 Let $s\ge0$ be arbitrary,~$\II\subset\Z^d$ be a finite  set containing   zero, and~$\HH_k'(\II)$, $k\in \N\cup\{\ty\}$ be the subspaces  defined by~\eqref{3.7} and~\eqref{3.8} for~$\HH=\HH(\II)$.
       \begin{proposition}\label{P:4.2}
       The~subspace $\HH_\ty'(\II)$   is   dense in $H^s$   if and only if~$\II$ is a generator.  
  \end{proposition}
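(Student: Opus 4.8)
The plan is to reduce Proposition~\ref{P:4.2} to the corresponding statement about the linear saturation procedure, namely Proposition~\ref{P:4.1}. The key observation is that the two saturation constructions, although defined via different elementary operations (taking products of $2p+1$ factors in~\eqref{3.9} versus taking products of pairs in~\eqref{1.6}), generate the same increasing union of subspaces. Indeed, Lemma~\ref{L:3.3} already rewrites the nonlinear step~\eqref{3.7} in the multiplicative form~\eqref{3.9}, so the comparison is now purely about how fast products of trigonometric monomials are generated. First I would prove the inclusion $\HH_k(\II)\subset\HH_{k}'(\II)$ for every $k$: any product $\zeta\xi$ with $\zeta,\xi\in\HH_{k-1}$ can be obtained as a product $\zeta\cdot\xi\cdot 1\cdots 1$ of $2p+1$ factors, using that $1\in\HH(\II)\subset\HH_{k-1}'(\II)$, so by~\eqref{3.9} it lies in $\HH_k'(\II)$; an easy induction then gives $\HH_\ty(\II)\subseteq\HH_\ty'(\II)$.

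For the reverse inclusion $\HH_\ty'(\II)\subseteq\HH_\ty(\II)$ I would argue that any product of $2p+1$ elements of $\HH_{k-1}(\II)$ lies in $\HH_{k+2p-1}(\II)$ (or some fixed shift), since a $(2p+1)$-fold product can be built up by $2p$ successive binary products, each of which raises the index by one in the construction~\eqref{1.6}. Combining the two inclusions yields $\HH_\ty'(\II)=\HH_\ty(\II)$ as subspaces of $H^s$, hence the same closure. Then Proposition~\ref{P:4.2} follows immediately from Proposition~\ref{P:4.1}: $\HH_\ty'(\II)$ is dense in $H^s$ if and only if $\HH_\ty(\II)$ is, which holds if and only if $\II$ is a generator.

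The only mildly delicate point is to handle the ``only if'' direction cleanly without re-deriving everything: if $\II$ is not a generator, one notes as in the proof of Proposition~\ref{P:4.1} (and in Theorem~\ref{T:3.9}) that $\HH(\tilde\II)$ is invariant under multiplication — it is the linear span of $\{\cos\lag l,x\rag,\sin\lag l,x\rag: l\in\tilde\II\}$, and products of such functions stay in $\HH(\tilde\II)$ by the trigonometric product formulas — and contains $\HH(\II)$, so $\HH_\ty'(\II)\subseteq\HH(\tilde\II)$; picking $l\in\Z^d_*\setminus\tilde\II$ shows $\cos\lag l,x\rag$ is orthogonal to $\HH_\ty'(\II)$ in $H^s$, so it is not dense. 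I expect the main (very minor) obstacle to be bookkeeping the index shift in the inclusion $\HH_\ty'(\II)\subseteq\HH_\ty(\II)$, i.e.\ verifying that a $(2p+1)$-fold product of elements from $\HH_{k}(\II)$ indeed lands in some $\HH_{k'}(\II)$ with $k'$ finite; this is a routine induction on the number of factors using the definition~\eqref{1.6} and the fact that $1\in\HH(\II)$, so no real difficulty arises. The proof is therefore essentially a two-line reduction once Lemma~\ref{L:3.3} and Proposition~\ref{P:4.1} are in hand.
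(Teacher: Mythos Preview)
Your proposal is correct and follows essentially the same route as the paper. The paper is slightly more economical: for the ``if'' direction it only proves the one-sided inclusion $\HH_j(\II)\subset\HH_j'(\II)$ (exactly as you do, by padding with $1$'s via Lemma~\ref{L:3.3}) and then invokes Proposition~\ref{P:4.1}; for the ``only if'' direction it observes $\HH_\ty'(\II)\subset\HH(\tilde\II)$, which is precisely your alternative argument. Your additional reverse inclusion $\HH_\ty'(\II)\subseteq\HH_\ty(\II)$ is true but unnecessary, and your sketch of it has a small wrinkle: in the definition~\eqref{1.6} the second factor $\xi$ must lie in $\HH=\HH_0$, not in $\HH_{j-1}$, so ``$2p$ successive binary products'' of elements from $\HH_{k-1}$ do not directly land in $\HH_{k+2p-1}$; one first has to decompose each factor into products of elements of $\HH_0$, giving an index shift of order $(2p+1)k$ rather than $k+2p$. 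This is harmless for the union $\HH_\ty$, but since the step is not needed you can simply drop it.
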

\begin{proof}Let $\II$ be a   generator.~From Lemma~\ref{L:3.3}   and  the assumption that~$1\in \HH(\II)$ it follows that $\HH_j(\II)\subset \HH_j'(\II)$ for any $j\ge1$.~Thus, we have $\HH_\ty(\II)\subset \HH_\ty'(\II)$, so that~$\HH_\ty'(\II)$ is danse in $H^s$, by 
Proposition~\ref{P:4.1}.~The other assertion is proved as in Proposition~\ref{P:4.1}, by noticing that~$\HH_\ty' (\II)\subset \HH(\tilde \II)$.
\end{proof}

\def\cprime{$'$} \def\cprime{$'$}
  \def\polhk#1{\setbox0=\hbox{#1}{\ooalign{\hidewidth
  \lower1.5ex\hbox{`}\hidewidth\crcr\unhbox0}}}
  \def\polhk#1{\setbox0=\hbox{#1}{\ooalign{\hidewidth
  \lower1.5ex\hbox{`}\hidewidth\crcr\unhbox0}}}
  \def\polhk#1{\setbox0=\hbox{#1}{\ooalign{\hidewidth
  \lower1.5ex\hbox{`}\hidewidth\crcr\unhbox0}}} \def\cprime{$'$}
  \def\polhk#1{\setbox0=\hbox{#1}{\ooalign{\hidewidth
  \lower1.5ex\hbox{`}\hidewidth\crcr\unhbox0}}} \def\cprime{$'$}
  \def\cprime{$'$} \def\cprime{$'$} \def\cprime{$'$}

\end{document}